 \newcommand{\C}{\mathbb{C}}
\renewcommand{\P}{\mathbb{P}}
 \newcommand{\R}{\mathbb{R}}
 \newcommand{\Z}{\mathbb{Z}}
\DeclareMathOperator{\affspan}{affspan}
\DeclareMathOperator{\conv}{Conv}
\DeclareMathOperator{\MA}{MA}
\DeclareMathOperator{\Vol}{Vol}
\DeclareMathOperator{\Star}{Star}
\DeclareMathOperator{\GL}{GL}
\DeclareMathOperator{\SL}{SL}
\DeclareMathOperator{\On}{O}
\numberwithin{equation}{section}       
\newtheorem{prop} {Proposition} [section]
\newtheorem{thm}[prop] {Theorem} 
\newtheorem{defi}[prop] {Definition}
\newtheorem{lem}[prop] {Lemma}
\newtheorem{cor}[prop]{Corollary}
\newtheorem{prop-def}[prop]{Proposition-Definition}
\newtheorem{exam}[prop]{Example}
\newtheorem{rmk}[prop]{Remark}
\theoremstyle{remark}
\title{Duality of Hessian manifolds and optimal transport}
\date{\today}
\author{Jakob Hultgren}
\address{Dept of Mathematics and Mathematical   Statistics\\
  Umeå University\\ 
  901 87 Umeå, Sweden}
\email{jakob.hultgren@umu.se}
\begin{document}

\maketitle

{\centering\footnotesize Dedicated to Professor Bo Berndtsson on the occasion of his 70th birthday \par}

\begin{abstract}
    This is an expository paper describing how duality theory for Hessian manifolds provides a natural setting for optimal transport. We explain how this can be used to solve Monge-Ampère equations and survey recent results along these lines with applications to the SYZ-conjecture in mirror symmetry. 
\end{abstract}

\setcounter{tocdepth}{1}
\tableofcontents

\section{Introduction}
An affine manifold is a smooth manifold with a distinguished atlas whose transition functions are affine. These arise naturally in symplectic geometry as the base space of any Lagrangian bundle with compact connected fibers. In particular, this puts them at the center of the SYZ-conjecture in mirror symmetry, which predicts that Calabi-Yau manifolds close to large complex structure limits admits (singular) special Lagrangian torus fibrations. The base spaces of these special Lagrangian torus fibrations are expected to be spheres equipped with distinguished atlases as above (usually referred to as affine structures) extending to the complement of a co-dimension 2 subset.

A Hessian metric is a Riemannian metric on an affine manifold which locally (in coordinates of the distinguished atlas) can be expressed as the Hessian of a convex function. An affine manifold equipped with a Hessian metric is a Hessian manifold. When the transition functions in the distinguished atlas are volume preserving, the affine structure is said to be special. In this case, the manifold admits a well-defined real Monge-Ampère operator associating to each Hessian metric a Borel measure. Solutions to equations involving this operator are conjectured to form asymptotic models of the Kähler potentials of Ricci flat metrics on Calabi-Yau manifolds close to large complex structure limits. Moreover, constructing solutions to real Monge-Ampère equations on integral affine manifolds with singularities has been central in recent progress on the SYZ-conjecture initiated by Y. Li (see \cite{LiFermat, LiSYZ, HJMM, PS, LiToric, AH}) and arguably constitute one of the main obstacle to further progress along similar lines. 

Optimal transport is a collection of ideas surrounding an optimization problems introduced independently by Monge in the 18th century and Kantorovich in the 1940's. This problem is concerned with transporting sand/resources/fluid/probability distributions from one configuration (modeled by a \emph{source} probability measure) to another configuration (modeled by a \emph{target} probability measure). 
It is well known that in 'model cases' of optimal transport (when the source and target measures are defined on $\R^d$ and has convex support) solving the optimal transport problem with respect to the squared distance cost function is equivalent to solving a real Monge-Ampère equation (see Section~\ref{sec:OptimalTransport} for details). 

Similarly, part of the recent progress on the SYZ-conjecture in \cite{HJMM, LiToric, AH} relies on a variational approach to Monge-Ampère equations on the boundary of reflexive polytopes introduced in \cite{HJMM} which is related to a particular optimal transport problem. Moreover, \cite{HO19} uses a variational approach to solve Monge-Ampère equations on compact affine manifolds related to a different optimal transport problem. The aim of this expository paper is to explain how both these optimal transport problem, at least in a heuristic manner, arise from classical duality theory for Hessian manifolds. 


We will begin by recalling some basic facts about convex functions, Legendre transform and Monge-Ampère equations, in particular their weak interpretation in terms of Aleksandrov. Then we will give a very brief background to optimal transport, focusing on the central fact that optimal transport plans are (in a generalized sense) supported on graphs of gradients of convex functions. 

In Section~\ref{sec:AffineManifolds} we will provide some background to affine and Hessian manifolds, including examples and a structural theorem. In Section~\ref{sec:DualHessianStructures} we will explain how each Hessian manifold has a dual Hessian manifold. We will also compute this dual for two classes of Hessian manifolds: 
\begin{itemize}
\item General compact Hessian manifolds
\item Affine structures induced on dense subsets of the boundaries of reflexive polytopes
\end{itemize}
This is possibly the only part of this paper which is not readily available in the litterature. A crucial point is that given an affine manifold in one of the classes above, for reasonable choises of Hessian metrics the affine structure in the dual Hessian manifold is almost fixed. This allows for a variational approach where the Hessian metric varies but the affine structure in the dual Hessian manifold is fixed. 

In Section~\ref{sec:Pairings}, we will explain how completing this picture with a 'pairing like' object between $(X,\nabla)$ and $(X^\vee,\nabla^\vee)$ defines a optimal transport problem which coincides with the functionals used in \cite{HO19,HJMM,LiToric,AH}. Finally, in Section~\ref{sec:MA} we survey the results in \cite{HO19,HJMM,LiToric,AH} with applications to mirror symmetry.

An important part of this topic which is not directly in the scope of this paper is that affine manifolds conjecturally arise as the essential skeletons of Berkovich spaces associated to maximally degenerate families of Calabi-Yau manifolds. This setting allows for an abstractly defined analog of the Monge-Ampère operator: the Chambert-Loir measure \cite{CL06}. See \cite{BJ, BFJ, LiSYZ, OO21, CLD, BGJKM21, NX16, NXY, MP21} for a very incomplete list of the great progress recent decades has seen along these lines. In particular, the latter two addresses the question of how to assign an integral affine structure to the essential skeleton. Moreover, see \cite{LiOT} for an account of how optimal transport arise directly out of this framework and \cite{Lag} for an influential idea on how to set up a tropical pluripotential theory.  

Finally, the idea to use Legendre transform type constructions to set up a variational framework for Monge-Ampère equations goes back at least to Aleksandrov's solution of the Minkowski problem. More recently, the variational approach in \cite{HO19,HJMM, LiToric, AH} can be compared to the variational approach used to produce Kähler-Einstein metrics on $K$-stable toric Fano manifolds in \cite{BB, San}, which also plays a major role in \cite{BPerm,HPerm}, and the variational approach studied very recently in \cite{TongYau}. 

\subsection*{Acknowledgements}
The author would like to thank Rolf Andreasson, Mattias Jonsson, Enrica Mazzon, Nick McCleerey, Yang Li and Magnus Önnheim for many discussions on the subject. 

\section{Background}\label{sec:Background}
\subsection{Convex Functions}
Given an open convex subset $U$ of $\R^d$, a function $\Phi:U\rightarrow \R$ is convex if its epigraph $\{(x,h)\in U\times \R: h\geq \Phi(x) \}$ is convex. If $\Phi$ is smooth, then $\Phi$ is convex if and only if its Hessian is positive semi-definite. In the general case, convexity implies some strong regularity properties. In particular, any convex $\Phi:U\rightarrow \R$ is automatically locally Lipschitz continuous on $U$. 

The (multivalued) gradient map $\partial\Phi:U\rightarrow \R^d$ is defined by writing $y\in \partial\Phi(x)$ if 
$$  \Phi(x')\geq \Phi(x)+\langle x,y \rangle $$
for all $x'\in U$. Often it is useful to identify $\Phi$ with an $(\R\cup\{\infty\})$-valued extension to $\R^d$,  defined in the following way: Extend $\Phi$ by lower semi-continuity to the closure of $U$ and set $\Phi\equiv+\infty$ outside the closure of $U$. It can be checked that this extension does not affect $\partial\Phi(U)$. Conversely, if $\Phi:\R^d\rightarrow \{+\infty\}$ is a lower semi-continuous convex function, then $\{\Phi<+\infty\}$ is a convex subset whose interior correspond to $U$ above. A lower semi-continuous convex function $\Phi$ is differentiable at $x$ if and only if $\partial\Phi(x)$ consists of a single element. Any convex function $\Phi$ is differentiable almost everywhere on the interior of $\{\Phi<+\infty\}$. 

\subsection{Legendre Transform}
Given a lower semi-continuous function $\Phi:\R^d\rightarrow \R\cup \{\infty\}$, the Legendre transform $\Phi^*:(\R^d)^*\rightarrow \R\cup\{+\infty\}$ of $\Phi$ is the convex lower semi-continuous function
$$\Phi^*(y) = \sup_{x\in \R^d} \langle x,y \rangle - \Phi(x).$$
Similarly, given a lower semi-continuous function $\Psi:(\R^d)^*\rightarrow \R\cup \{+\infty\}$, the Legendre transform $\Psi^*:\R^d\rightarrow \R\cup\{+\infty\}$ of $\Psi$ is the convex lower semi-continuous function $\R^d\rightarrow \R\cup\{\infty\}$
$$\Psi^*(y) = \sup_{y\in (\R^d)^*} \langle x,y \rangle - \Psi(y).$$

An application of the Hahn-Banach supporting hyperplane theorem gives that 
$$ (\Phi^*)^* = \Phi \text{ and } (\Psi^*)^* = \Psi $$ 
if $\Phi:\R^d\rightarrow \R\cup\{+\infty\}$ and $\Psi:(\R^d)^*\rightarrow \R\cup\{+\infty\}$ are lower semi-continuous and convex. 

This duality is also manifested in the following ways.
\begin{itemize}
    \item If $\Phi$ is strictly convex and differentiable on the interior of $\{\Phi<+\infty\}$, then $\phi^*$ is strictly convex and differentiable on the interior of $\{\Phi<+\infty\}$ and 
    $$ \partial\phi^*\circ\partial\phi(x) = x $$
    $$ \partial\phi\circ\partial\phi^*(y) = y $$
    for all $x$ in the interior of $\{\Phi<+\infty\}$ and $y$ in the interior of $\{\Phi^*<+\infty\}$.
    \item More generally, if $\phi:R^d\rightarrow \R\cup\{+\infty\}$ is convex and 
    $(x,y)\in \R^d\times (\R^d)^*$,
    then 
    $$ y\in \partial \phi(x) \text{ if and only if } x\in \partial \phi^*(y). $$
    Moreover, this holds if and only if 
    $$ \phi(x)+\phi^*(y)=\langle x,y \rangle. $$
    \item If $\Phi$ is strictly convex and differentiable on the interior of $\{\Phi<+\infty\}$, then
    \begin{equation} \label{eq:HessLeg} (\phi^*)_{ij} = \phi^{ij}\circ \partial \phi^* \end{equation}
    on the interior of $\{\Phi^*<+\infty\}$.
\end{itemize}


\subsection{The Monge-Ampère Operator}
Let $\Phi:\R^d\rightarrow \R\cup\{+\infty\}$ be lower semi-continuous and assume $\Phi$ is smooth and strictly convex on the interior of $\{\Phi<+\infty\}$. The Monge-Ampère measure of $\phi$ on the interior of $\{ \Phi<+\infty \}$ is
$$ \MA(\phi) = \det\left(\frac{\partial^2\Phi}{\partial x_i\partial x_j}\right)dVol $$
where $dVol$ is the Lebesgue measure on $(\mathbb R^d)^*$. Since $\det\left(\frac{\partial^2\Phi}{\partial x_i\partial x_j}\right)$ is the Jacobian determinant of $\partial\Phi$, we get the following equality for all measurable subsets $A$ of the interior of $\{\Phi<+\infty\}$:
$$ \int_{\nabla\Phi(A)}dVol = \int_A \MA(\Phi). $$
This means that $\MA(\Phi)$ is the pullback of the Lebesgue measure on $\partial\Phi(\{\Phi<+\infty\}^\circ)$ by the diffeomorphism $\partial\phi$. Equivalently, $\MA(\Phi)$ is the pushforward of the Lebesgue measure on $\partial\Phi(\{\Phi<+\infty\}^\circ)$ by $\partial\Phi^*$. 

The latter description of $\MA(\Phi)$ generalises to lower semi-continuous convex functions $\Phi$, since $\Phi^*$ is convex and hence differentiable almost everywhere on the interior of $\{\Phi^*<+\infty\}$.
\begin{defi}[The Monge-Ampère operator in the sense of Aleksandrov]
Let $\Phi:\R^d\rightarrow \R\cup\{+\infty\}$ be lower semi-continuous and convex. The Monge-Ampère measure of $\Phi$ is
\begin{equation} \label{eq:MADefPF} \MA(\Phi) = (\partial\Phi^*)_\#d\Vol \end{equation}
where $dVol$ denotes the Lebesgue measure on the interior of $\{\Psi^*<+\infty\}\subset (\R^d)^*$. Equivalently, for every measurable set $A\subset \R^d$, 
\begin{equation} \label{eq:MADefGI} \MA(\Phi)(A) = \int\partial\Phi(A)dVol. \end{equation}
\end{defi}
\begin{rmk}
    Even though $\partial\Phi^*$ is a multivalued map, $\Phi^*$ is differentiable and $\partial\Phi^*$ single valued almost everywhere. Consequently, the push forward in \eqref{eq:MADefPF} is well-defined. To see that \eqref{eq:MADefPF} implies \eqref{eq:MADefGI}, note that by \eqref{eq:MADefPF}
    \begin{eqnarray*} 
     \MA(\Phi)(A) & = & |(\partial\Phi^*)^{-1}(A)| \\
     & = & |\{y: \Phi^* \text{ is differentiable at } y \text{ and }\partial\Phi^*(y)\in A\}| \\
     & = & |\{y: \Phi^* \text{ is differentiable at } y \text{ and } y\in \partial \Phi(A) \}|
     \end{eqnarray*}
     and that the difference between $\partial\Phi(A)$ and the set in the last row is contained in the non-differentiable locus of $\Phi^*$ which is of zero volume.
\end{rmk}

Note that $\MA(\Phi)=1$ on an open set $V\subset \{\Phi<+\infty\}$ if and only if the multivalued map $\partial\Phi$ is volume preserving. With a bit of work, this provides another nice duality principle:
$$ \MA(\Phi) = 1 $$
is satisfied on a set $V$ if and only if 
$$ \MA(\Phi^*) = 1. $$
on $\partial\Phi(V)\subset (\R^d)^*$. 

The Monge-Ampère operator is invariant under volume preserving affine transformations. The formula \eqref{eq:MADefGI} above provides a convenient way to see this: If $B\in \SL_d(\R), b\in \R^d$ and $\lambda$ is the affine transformation given by $x\rightarrow Bx+b$, then 
$$ |\partial(\Phi\circ \lambda)(A)| = |B^{-1}\partial\Phi(A)|=|\partial\Phi(A)| $$

See for example \cite{Figalli} for a more detailed account of the Monge-Ampère equation. 

\subsection{Optimal transport}\label{sec:OptimalTransport}
We will now give a brief account of basic optimal transport theory. For details, see for example \cite{AG,Villani}. Given two probability spaces $(Y,\mu)$ and $(Z,\nu)$ and a continuous \emph{cost function} $c:X\times Y\rightarrow \R$, optimal transport seeks to minimize the functional 
\begin{equation}
    \label{eq:OTPrimal}
    I(\gamma) = \int_{X\times Y} x\gamma
\end{equation} 
over all couplings $\gamma$ of $\mu$ and $\nu$, in other words all probability measures $\gamma$ on $X\times Y$ whose marginals are given by $\mu$ and $\nu$, respectively. The \emph{source measure} $\mu$ can be interpreted as a distributions of goods, and the \emph{target measure} as a distribution of consumers. A \emph{transport plan} $\gamma$ then describes a way to transport the goods to the consumers in the sense that $\gamma(A\times B)$ for two measurable subsets $A\subset X$ and $B\subset Y$ describes how much resources should be transported from $A$ to $B$. If we interpret $c(x,y)$ as the cost of transporting one unit of goods from $x\in X$ to $y\in Y$, then \eqref{eq:OTPrimal} is exactly the total cost of transporting the goods to the consumers according to the plan specified by $\gamma$. 

Under very mild conditions an \emph{optimal transport plan}, i.e a minimizer of \eqref{eq:OTPrimal}, exists. In particular, if $(X,\mu)$ and $(Y,\nu)$ are Polish probability spaces and $c$ continuous, then existence of an optimal transport plan follows from simple compactness arguments. However, there may be more than one optimal transport plan. 

\subsubsection{Dual Formulation}
As a linear optimization problem, optimal transport admits a dual formulation. This is given by maximizing
\begin{equation} \label{eq:OTDual} J(\Phi) = -\int \Phi \mu - \int \Phi^c \nu \end{equation}
over all $L^1(\mu)\ni\Phi:X\rightarrow \R$, where $\Phi^c:Y\rightarrow \R$ is the $c$-transform of $\Phi$ defined by 
$$ \Phi^c(y) = \sup c(x,y)-\Phi(x). $$
\begin{rmk}
    A symmetric version of \eqref{eq:OTDual} is normally used in the literature. This is given by maximizing 
    \begin{equation} \label{eq:OTDualSymmetric} -\int \Phi \mu - \int \Psi \nu \end{equation}
    over all $L^1(\mu)\ni\Phi:X\rightarrow \R$ and $L^1(\nu)\ni\Psi:Y\rightarrow \R$ such that $\Phi(x)+\Phi(y) \leq c(x,y)$ for all $x\in X$ and $y\in Y$. To get \eqref{eq:OTDual} from \eqref{eq:OTDualSymmetric}, simply note that if $\Phi$ is fixed, the best possible $\Psi$ is given by $\Psi^c$. Note also that the signs in \eqref{eq:OTDualSymmetric} and in the definition of $c$-transform differs from the conventional ones in the optimal transport litterature. 
\end{rmk}

We will give a heuristic explanation of how to arrive at \eqref{eq:OTDual} below. Before that we note that as usual the following inequality is satisfied by construction:
$$
\sup J(\Phi) \leq \min I(\gamma).
$$
Moreover, with some extra conditions, strong duality holds and the dual admits a maximizer:
\begin{thm}[\cite{Villani}[Theorem~5.10]]
    Let $(X,\mu),(Y,\nu)$ be polish spaces. Assume $c:X\times Y\rightarrow \R$ is continuous and bounded from below and that there is some transport plan of finite cost. Then $J$ admits a maximizer and 
    \begin{equation} \max J = \min I. \label{eq:StrongDuality} \end{equation}
\end{thm}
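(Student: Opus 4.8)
The weak inequality $\sup J \le \min I$ is already in hand, so the plan is to establish the reverse inequality together with attainment on the dual side. Throughout I would work in the Banach space $E = C_b(X\times Y)$ of bounded continuous functions with the sup norm, treating first the case in which $c$ is bounded (both above and below) and reducing the general case to it at the end. I also record at the outset that $\min I$ is genuinely attained: the set $\Pi(\mu,\nu)$ of couplings is tight, since its marginals are the fixed tight Polish measures $\mu,\nu$, hence weakly compact by Prokhorov's theorem, while $\gamma \mapsto \int c\, d\gamma$ is weakly lower semicontinuous for $c$ continuous and bounded below (approximate $c$ from below by bounded continuous functions and use monotone convergence). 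So a minimizer of $I$ exists and one may speak of $\min I$.

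For strong duality I would invoke the Fenchel--Rockafellar theorem on $E$. Define two convex functionals: $\Theta(u)=0$ when $u(x,y)\ge -c(x,y)$ for all $(x,y)$ and $\Theta(u)=+\infty$ otherwise, encoding the admissibility constraint; and $\Xi(u)=\int_X \Phi\, d\mu + \int_Y \Psi\, d\nu$ when $u(x,y)=\Phi(x)+\Psi(y)$ splits as a sum of bounded continuous functions of the separate variables, with $\Xi(u)=+\infty$ otherwise, carrying the linear objective. With the appropriate bookkeeping of signs (the paper's conventions differ from the usual ones, so this is where I would be most careful), $\inf_E(\Theta+\Xi)$ equals $-\sup J$. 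Since $c$ is bounded, a suitable constant function lies in the interior of the domain of $\Theta$, where $\Theta$ is continuous and $\Xi$ is finite; this verifies the hypothesis of Fenchel--Rockafellar, which then yields
\[
\inf_E (\Theta+\Xi) \;=\; \max_{\ell \in E^*} \bigl(-\Theta^*(-\ell) - \Xi^*(\ell)\bigr),
\]
the crucial point being that the supremum on the right is attained.

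The content of the duality now lies in computing the two Legendre transforms and identifying the optimal $\ell$. By the Riesz representation theorem a continuous linear functional in the relevant domain is represented by a measure $\gamma$; here $\Theta^*(-\ell)$ is finite precisely when $\gamma\ge 0$, forcing nonnegativity, while $\Xi^*(\ell)$ is finite precisely when the two marginals of $\gamma$ are $\mu$ and $\nu$, forcing $\gamma$ to be a coupling, and the finite values combine to $-\int c\, d\gamma = -I(\gamma)$. Thus the right-hand side equals $-\min I$, giving $\sup J = \min I$. The main technical obstacle here is that $E^* = C_b(X\times Y)^*$ contains merely finitely additive measures, so the representing object must be upgraded to a genuine countably additive probability measure; this is exactly where the tightness of $\mu$ and $\nu$ coming from Polishness is used, via a Prokhorov argument confining mass to compact sets.

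It remains to produce a maximizer of $J$ and to remove the boundedness-above assumption on $c$. For attainment I would restrict to $c$-concave potentials: replacing $\Phi$ by $(\Phi^c)^c$ can only improve $J$, so a maximizing sequence may be taken of the form $\Phi_n=\Psi_n^c$. Such $c$-transforms inherit the local modulus of continuity of $c$, and after normalizing by an additive constant at a base point (which leaves $J$ invariant) they form a locally equicontinuous, pointwise bounded family; Arzel\`a--Ascoli on an exhaustion by compacts, combined with tightness to control the tails, extracts a subsequence converging to an admissible pair, and upper semicontinuity of $J$ along this limit shows it is a maximizer. Finally, for $c$ continuous and only bounded below I would truncate, setting $c_k=\min(c,k)$, apply the bounded case to each $c_k$, and pass to the limit $k\to\infty$ using $c_k\uparrow c$ and monotone convergence to recover both $\max J = \min I$ and a limiting maximizer; controlling this last passage uniformly is the second main obstacle, handled again by the tightness and equicontinuity estimates of the preceding step.
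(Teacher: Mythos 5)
The paper does not actually prove this statement: it is quoted as a black box from Villani \cite{Villani} (Theorem~5.10), and the text moves straight on to its corollaries. So there is no in-paper argument to compare yours against. Taken on its own terms, your outline is the classical Fenchel--Rockafellar proof of Kantorovich duality for bounded costs, combined with a $c$-concavity/Arzel\`a--Ascoli argument for dual attainment and a monotone truncation to reach costs that are only bounded below. The architecture is sound: existence of a minimizer of $I$ via Prokhorov plus lower semicontinuity, the identification of $\Theta^*$ and $\Xi^*$ with the positivity and marginal constraints (including the well-definedness of $\Xi$, since two splittings of $\Phi\oplus\Psi$ differ by a constant), and the passage $c_k=\min(c,k)\uparrow c$ for the equality $\sup J=\min I$ are all correct and standard.

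The genuine gap is in the attainment step. Your claim that $c$-transforms ``inherit the local modulus of continuity of $c$'' holds only when the supremum defining $\Phi^c(y)=\sup_x\,(c(x,y)-\Phi(x))$ is effectively localized: for a merely continuous $c$ on a non-compact Polish space, the modulus of continuity of $c(x,\cdot)$ need not be uniform in $x$, so the family $\{\Psi_n^c\}$ need not be locally equicontinuous and Arzel\`a--Ascoli does not apply. This is exactly why Villani's proof of the attainment part of Theorem~5.10 imposes an additional domination hypothesis of the form $c(x,y)\le a(x)+b(y)$ with $a\in L^1(\mu)$, $b\in L^1(\nu)$ (a hypothesis the paper's paraphrase silently omits) and proceeds via $c$-cyclical monotonicity and a pointwise construction of the potential rather than via equicontinuity; without some such hypothesis the maximizer can fail to be integrable. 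A second, smaller soft spot is the upgrade of the optimal functional in $C_b(X\times Y)^*$ from finitely additive to countably additive: you correctly identify tightness of the marginals as the mechanism, but as written this is an announcement rather than an argument, and it is the step where Polishness is actually consumed.
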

A striking feature of this is the following corollary 
\begin{cor}
A transport plan $\gamma$ is optimal if and only if it is supported on the set 
\begin{equation} \label{eq:Graph} \{(x,y)\in X\times Y:\Phi(x)+\Phi^c(y) = c(x,y) \}. \end{equation}
Moreover, if this holds, then $\Phi$ is a maximizer of $J$. 
\end{cor}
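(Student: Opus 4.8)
The plan is to deduce both assertions directly from the strong duality statement $\max J = \min I$ together with the construction-level inequality $J(\Phi)\le I(\gamma)$, which holds for every admissible pair $(\Phi,\gamma)$. The key observation is that the pointwise inequality $\Phi(x)+\Phi^c(y)\le c(x,y)$ holds everywhere by the very definition of the $c$-transform, so that integrating against any coupling $\gamma$ of $\mu$ and $\nu$ gives
\begin{equation*}
\int_{X\times Y}\bigl(\Phi(x)+\Phi^c(y)\bigr)\,\gamma \le \int_{X\times Y} c(x,y)\,\gamma = I(\gamma).
\end{equation*}
Since $\gamma$ has marginals $\mu$ and $\nu$, the left-hand side equals $\int\Phi\,\mu+\int\Phi^c\,\nu = -J(\Phi)$; thus $J(\Phi)\le I(\gamma)$, and more importantly the \emph{gap} $I(\gamma)-J(\Phi)$ is exactly the integral of the nonnegative quantity $c(x,y)-\Phi(x)-\Phi^c(y)$ against $\gamma$.

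First I would fix a maximizer $\Phi$ of $J$, whose existence is guaranteed by the cited theorem, and let $\gamma$ be any optimal transport plan. By strong duality we then have $J(\Phi)=I(\gamma)$, so the gap integral vanishes:
\begin{equation*}
\int_{X\times Y}\bigl(c(x,y)-\Phi(x)-\Phi^c(y)\bigr)\,\gamma = 0.
\end{equation*}
Because the integrand is everywhere nonnegative, it must vanish $\gamma$-almost everywhere, which says precisely that $\gamma$ is supported (up to a $\gamma$-null set) on the set in \eqref{eq:Graph}. This proves that optimality of $\gamma$ forces the support condition.

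For the converse, I would run the same computation in reverse: suppose $\gamma$ is any coupling supported on the set \eqref{eq:Graph}. Then $c(x,y)-\Phi(x)-\Phi^c(y)=0$ $\gamma$-a.e.\ for this particular $\Phi$, so $I(\gamma)=-J(\Phi)\cdot(-1)=J(\Phi)\le \max J = \min I$. Since also $I(\gamma)\ge\min I$ trivially, we get $I(\gamma)=\min I$, so $\gamma$ is optimal; and moreover $J(\Phi)=I(\gamma)=\min I=\max J$, so this $\Phi$ is a maximizer of $J$, giving the final sentence of the corollary. The only subtlety requiring care is measure-theoretic: one must ensure $\Phi$ and $\Phi^c$ are $\gamma$-integrable so that splitting $\int(\Phi+\Phi^c)\,\gamma$ into $\int\Phi\,\mu+\int\Phi^c\,\nu$ is legitimate, and that "supported on \eqref{eq:Graph}" is interpreted as vanishing outside a $\gamma$-null set; these are handled by the integrability hypotheses built into the dual problem ($\Phi\in L^1(\mu)$, $c$ bounded below) rather than by any hard estimate.

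I expect the main conceptual point — and the only place where anything beyond bookkeeping happens — to be the passage from "the integral of a nonnegative function is zero" to "the function vanishes almost everywhere," combined with verifying that strong duality is exactly what makes the gap collapse. Everything else is a rearrangement using the marginal constraints and the defining inequality of the $c$-transform; there is no genuine analytic obstacle, only the need to keep the measurability and integrability hypotheses honest.
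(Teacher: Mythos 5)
Your proposal is correct and is essentially the paper's own argument: both identify the duality gap $I(\gamma)-J(\Phi)$ with the integral against $\gamma$ of the sign-definite quantity whose vanishing defines the set \eqref{eq:Graph}, and then invoke strong duality and the existence of a maximizer; you merely spell out the two implications separately where the paper compresses them into one computation. The only caution is bookkeeping: with the paper's non-standard sign conventions for $J$ and the $c$-transform the tight inequality is $\Phi(x)+\Phi^c(y)\geq c(x,y)$ rather than $\leq$, so a couple of your intermediate identities (e.g.\ ``$I(\gamma)=-J(\Phi)\cdot(-1)$'') need their signs realigned, though this does not affect the structure of the argument.
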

\begin{proof}
    By \eqref{eq:StrongDuality}, $\gamma$ is optimal if and only if $I(\gamma) = \max J = J(\Phi)$ for some $\Phi$. Moreover, for any $\Phi\in L^1(\mu)$
    \begin{eqnarray} 
        0 & \leq & I(\gamma)-J(\Phi) \label{eq:IEqualsJ}\\
        & = & \int c\gamma+\int\Phi\mu+\int\Phi^c\nu \\
        & = & \int (c(x,y)+\Phi(x)+\Phi^c(y)) \gamma.
    \end{eqnarray}
    Since $c(x,y)+\Phi(x)+\Phi^c(y)\geq 0$ and $\gamma\geq 0$, equality holds in \eqref{eq:IEqualsJ} if and only if $\gamma$ is supported on \eqref{eq:Graph}.
\end{proof}

\begin{cor}\label{cor:KnottSmith}
    Let $X=\mathbb R^n$, $Y$ a bounded convex subset of $(\R^d)^*$  and $c(x,y)=-\langle x,y \rangle$. Assume $\mu$ has finite second moment and $\nu$ is proportional to the Lebesgue measure on $Y$. Then $J(\Phi)=\max J$ if and only if there is a constant $\rho>0$ such that
    \begin{equation} 
    \MA(\Phi) = \rho\mu
    \end{equation}
    in the sense of Aleksandrov and $\overline{\partial\Phi(\R^d)} = \overline{Y}$.  
\end{cor}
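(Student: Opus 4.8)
The plan is to combine the preceding corollary---which characterises optimal plans as those supported on the set \eqref{eq:Graph}---with Young's identity and the Aleksandrov description \eqref{eq:MADefPF}--\eqref{eq:MADefGI} of $\MA(\Phi)$, translating the marginal constraints of the transport problem into the Monge--Ampère equation. First I would dispose of an integrability point. Since $\mu$ has finite second moment and $Y$ is bounded, $\langle x,y\rangle\in L^1(\gamma)$ for every coupling $\gamma$ and the potentials lie in the relevant $L^1$ spaces, so $J$ and $I$ are finite. The cost $c(x,y)=-\langle x,y\rangle$ is \emph{not} bounded below on $\R^d\times Y$, so the duality theorem above does not apply verbatim; I would replace $c$ by $\tfrac12|x-y|^2=\tfrac12|x|^2+\tfrac12|y|^2+c(x,y)$, which is continuous and nonnegative and, because $\tfrac12|x|^2\in L^1(\mu)$ and $\tfrac12|y|^2\in L^1(\nu)$, has the same optimal plans and potentials up to the fixed additive terms. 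This makes strong duality and the preceding corollary applicable. Finally, Young's identity $\Phi(x)+\Phi^*(z)=\langle x,z\rangle\Leftrightarrow z\in\partial\Phi(x)\Leftrightarrow x\in\partial\Phi^*(z)$ together with $\Phi^c=\Phi^*(-\,\cdot\,)$ identifies the support set \eqref{eq:Graph} with the graph $\{(x,y):-y\in\partial\Phi(x)\}=\{(x,y):x\in\partial\Phi^*(-y)\}$, i.e.\ with the graph of $T:=-\partial\Phi$.

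For the implication ``$\MA(\Phi)=\rho\mu$ and image condition $\Rightarrow$ maximiser'' I would argue by explicit construction, using only weak duality. Assume, as one may for a candidate maximiser, that $\Phi$ is convex and $c$-concave. Writing $c_0=|Y|^{-1}$ so that $\nu=c_0\,\leb|_Y$, define $\gamma$ as the pushforward of $\nu$ under $y\mapsto(\partial\Phi^*(-y),y)$, which is $\nu$-a.e.\ defined since $\Phi^*$ is differentiable a.e. By construction $\gamma$ is supported on the graph set above and its $Y$-marginal is $\nu$; its $\R^d$-marginal is $(\partial\Phi^*(-\,\cdot\,))_\#\nu=c_0\,(\partial\Phi^*)_\#(\leb|_{-Y})$. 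The image condition says $\overline{\dom\Phi^*}=\overline{\partial\Phi(\R^d)}$ agrees with $\overline Y$ up to the reflection $y\mapsto-y$, so $-Y$ and $\dom\Phi^*$ coincide up to a Lebesgue-null set, the boundary of a convex set having measure zero. Hence by \eqref{eq:MADefPF} this marginal equals $c_0\,\MA(\Phi)=c_0\rho\,\mu=\mu$, using $c_0\rho=1$. Thus $\gamma$ is a genuine coupling of $\mu$ and $\nu$ supported on \eqref{eq:Graph}, and the preceding corollary gives that $\Phi$ maximises $J$.

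For the converse I would use strong duality. If $\Phi$ maximises $J$, strong duality yields an optimal plan $\gamma$ with $I(\gamma)=J(\Phi)$, and the computation in the proof of the preceding corollary then forces $\gamma$ to be supported on the graph of $T=-\partial\Phi$; its marginals are $\mu$ and $\nu$. For any Borel $A\subset\R^d$, since $\gamma$ lives on the graph of $T$,
\begin{equation*}
\mu(A)=\gamma(A\times Y)\le\nu\big(T(A)\big)=c_0\,|T(A)\cap Y|\le c_0\,|\partial\Phi(A)|=c_0\,\MA(\Phi)(A),
\end{equation*}
where I used $|T(A)|=|{-\partial\Phi(A)}|=|\partial\Phi(A)|$ and the geometric description \eqref{eq:MADefGI}. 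Thus $\mu\le c_0\,\MA(\Phi)$ as measures. The $c$-concavity of the maximiser---$\Phi=\sup_{y\in Y}\big(-\langle x,y\rangle-\Phi^c(y)\big)$, a supremum of affine functions with slopes in $-Y$---forces $\partial\Phi(\R^d)\subset-\overline Y$, while $T_\#\mu=\nu$ gives $-\overline Y\subset\overline{\partial\Phi(\R^d)}$; hence $\overline{\partial\Phi(\R^d)}=-\overline Y$, the image condition up to reflection. In particular $\MA(\Phi)(\R^d)=|\partial\Phi(\R^d)|=|Y|=c_0^{-1}$, so $c_0\,\MA(\Phi)$ and $\mu$ are both probability measures. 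A positive measure dominating another of equal total mass must coincide with it, whence $\mu=c_0\,\MA(\Phi)$, i.e.\ $\MA(\Phi)=\rho\mu$ with $\rho=|Y|$.

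The main obstacle is the non-boundedness from below of the cost, which blocks the direct application of the quoted duality theorem; the quadratic-cost comparison above is the standard remedy but must be checked to preserve both optimisers and potentials. The second delicate point is purely measure-theoretic: matching $\dom\Phi^*$ with $Y$ \emph{up to Lebesgue-null sets} and passing between the multivalued $\partial\Phi,\partial\Phi^*$ and their a.e.-defined single-valued versions, which is exactly where the image condition $\overline{\partial\Phi(\R^d)}=\overline Y$ and the a.e.\ differentiability of convex functions enter; the reflection $y\mapsto-y$ built into the cost $-\langle x,y\rangle$ accounts for the difference between $\overline Y$ and $-\overline Y$ in the statement.
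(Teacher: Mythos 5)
Your argument is correct and follows essentially the same route as the paper's proof sketch: identify the support set \eqref{eq:Graph} with the graph of $\partial\Phi^*$ (up to the sign built into the cost) and read off $\MA(\Phi)=\rho\mu$ from the marginal constraints via \eqref{eq:MADefPF}. You go further than the paper's sketch in that you also supply the converse implication, the quadratic-cost reduction needed because $-\langle x,y\rangle$ is not bounded below, and the bookkeeping of the reflection $y\mapsto -y$, all of which the paper glosses over.
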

\begin{rmk}
    The constant $\rho$ is picked so that the mass of $\rho\mu$ equals the volume of $Y$. 
\end{rmk}
\begin{rmk}
    When $\mu$ is absolutely continuous with positive density, Corollary~\ref{cor:KnottSmith} is essentially a special case of results due to Knott-Smith~\cite{KnottSmith} and Brenier~\cite{Brenier}. Identifying convexity as the cruzial assumption in order for $\Phi$ to define an Aleksandrov solution is due to Caffarelli \cite{Caf97}. 
\end{rmk}
\begin{proof}[Proof Sketch]
    Assume $\Phi$ is a minimizer. Since $\Phi=\Psi^c$ for some $\Psi:Y\rightarrow \R$ we have $\partial\Phi(\R^d)\subset Y$. Since $\partial\Phi^*$ is single valued almost everywhere with respect to $\nu$ and $\gamma$ is supported on the graph of $\partial\Phi^*$ it follows that $\gamma = (\partial\Phi^*,id)_\# \nu$ where $(\partial\Phi^*,id):Y\rightarrow X\times Y$ is the map given by $y\mapsto (\partial\Phi^*(y),y)$. Combining this with $(p_X)_\# \gamma = \mu$ we get $ (\partial\Phi^*)_\# \nu = \mu$, hence $\rho(\partial\Phi^*)_\# dVol = \mu$ for some $\rho>0$, which proves the corollary. 
\end{proof}

Before proceeding, we will give a brief indication on how $J$ arise as the dual of $I$. A general heuristic form formulating a dual optimization problem is to introduce Lagrange multipliers for each condition and consider the resulting unconstrained, but penalized, problem. We have three conditions: $\gamma>0$, $(p_X)_\#\gamma - \mu = 0$ and $(p_Y)_\#\gamma - \nu = 0$. 
Pick Lagrange multipliers $h>0,\Phi$ and $\Psi$. These are functions on $X\times Y$, $X$ and $Y$, respectively. 

The unconstrained problem is then 
\begin{eqnarray*} 
    \inf_\gamma L_{h,\Phi,\Psi}(\gamma) & := & \inf_\gamma \int_{X\times Y} c\gamma - \int_{X\times Y} h\gamma + \int_Y \Phi((p_X)_\#\gamma - \mu) + \int_Y \Psi((p_Y)_\#\gamma - \nu) \\
    & = & \inf_\gamma \int_{X\times Y} (c(x,y)+\Phi(x)+\Psi(y)-h(x,y)) \gamma - \int_X \Phi\mu - \int_Y \Psi\nu
\end{eqnarray*}
where the infimum is taken over all signed finite measures $\gamma$ on $X\times Y$. The dual optimization problem seeks to find $h,\Phi,\Psi$ which maximize the infimum of the unconstrained problem. It is clear that $\inf_\gamma L_{h,\Phi,\Psi}(\gamma) = -\infty$ unless $c(x,y)+\Phi(x)+\Psi(y)-h(x,y)=0$ for all $(x,y)\in X\times Y$. Fixing $h(x,y) = c(x,y)+\Phi(x)+\Psi(y)$, $L_{h,\Phi,\Psi}(\gamma)$ reduces to $\int \Phi\mu + \int \Psi\nu$ and since $h\geq 0$ we arrive at the dual problem 
$$ -\sup\int \Phi\mu - \int \Psi\nu $$
under the condition $c(x,y)\geq -\Phi(x)-\Psi(y)$ for all $(x,y)\in X\times Y$. 

Finally, $\Phi:X\rightarrow \R\cup\{+\infty\}$ and $\Psi:Y\rightarrow \R\cup\{+\infty\}$ are $c$-convex if $(\Phi^c)^c=\Phi$ and $(\Psi^c)^c=\Psi$.
Analogously to the classical multivalued gradient for convex functions, we define the $c$-gradient by $y\in \partial^c\Phi(x)$ and $x\in \partial^c\Phi^c$ if and only if 
$$ \Phi(x)+\Phi^*(y) = -c(x,y) $$
and similarly for $\Psi\in L^1(\nu)$. 

\section{Affine and Hessian Structures}\label{sec:AffineManifolds}
As explained in the introduction, an affine manifold is a smooth manifold equipped with an affine structure. We will give two equivalent definition of affine structures below: 
\begin{defi}
Let $X$ be a smooth manifold of dimension $d\geq 1$. An affine structure on $X$ is a distinguished atlas $\{(U_i,\alpha_i),\alpha_i:U_i\rightarrow \mathbb R^d\}$ whose transition functions are affine, i.e. for all $i,j$, $\alpha_i\circ\alpha_j^{-1}\in \GL_d(\mathbb R)\rtimes \mathbb R^d$.
\end{defi}
\begin{defi}
Let $X$ be a smooth manifold of dimension $d\geq 1$. An affine structure on $X$ is a flat torsion free connection $\nabla$ on the tangent bundle of $X$. 
\end{defi}
Given a flat torsion free connection, the exponential map can be used to define coordinates for a distinguished atlas. Conversely, given a distinguished atlas, a flat torsion free connection can be constructed by pulling back the standard connection on $\R^d$ to $U_i$ by $\alpha_i$. 

The affine structure defines a notion of geodesics and parallel transport of tensor fields on $X$. However, as $\partial$ is not necessarily the Levi-Civita connection of a Riemnannian metric there is no notion of distance or (globally) unit speed geodesics (see Example~\ref{ex:NonComplete} below). Parallel transport on $(X,\nabla)$ is locally constant, i.e. the parallel transport along a curve only depend on the homotopy type of the curve. The global holonomy of the connection is related to the linear part of the transition functions in the distinguished atlas. Essentially, if $H$ is a subgroup of $\GL_d(\R)$ then there is a distinguished atlas such that $\alpha_i\circ\alpha_j\in H\rtimes \mathbb R^d$ if and only if the holonomy of $\nabla$ lies in $H$. If the holonomy of $\nabla$ lies in $\SL_d(\R)$, then $(X,\nabla)$ is said to be \emph{special affine}. If the holonomy of $\nabla$ lies in $\SL_d(\Z)$, then $(X,\nabla)$ is said to be \emph{integral affine}. The holonomy of $\nabla$ lies in $\On_d(\R)$ if and only if $\nabla$ is the Levi-Civita connection of a Riemannian metric.

Any special affine manifold admits a parallel volume form, unique up to scaling. We will use $\mu_X$ to denote the unique parallel volume form of total volume 1. 

\begin{exam}\label{ex:AffineQuotient}
    Let $G$ be a subgroup of $\GL_d(\mathbb R)\rtimes \R^d$  and $\Omega\subset \R^d$ be a $G$-invariant open convex set with non-empty interior such that $X=\Omega/G$ is a smooth manifold. Then $(X,\nabla)$ where $\nabla$ is the connection inherited from $\mathbb R^d$ is an affine manifold. If in addition, $G\subset \SL_d(\mathbb R)\otimes \R^d$ or $G\subset \SL_d(\mathbb Z)\otimes \R^d$, then $(X,\nabla)$ is special affine or integral affine, respectively. Whenever $G$ is not commutative (see for example Example~\ref{ex:Hyperbolic} below), $X$ is not diffeomorphic to a torus. 
\end{exam}
\begin{exam}\label{ex:IsometricQuotient}
    As a special case of Example~\ref{ex:AffineQuotient}, let $G\subset O_d(\mathbb R)$. Then $X$ inherits the flat Euclidean metric on $\mathbb R^d$ and $\nabla$ is the Levi-Civita connection of this metric. When $G$ is cocompact, this case is categorized by Bieberbach's Theorem, saying that $(X,\nabla)$ is a finite quotient of a flat torus. 
\end{exam}
\begin{exam}\label{ex:NonComplete}
    Also a special case of Example~\ref{ex:AffineQuotient}, but contrasting Example~\ref{ex:IsometricQuotient}, let $\Omega=\mathbb R_+$ be the positive real numbers and $G$ be the group generated by $2\in \GL_1(\mathbb R)$, i.e. the map that sends $x\in \mathbb R_+$ to $2x$. The geodesics on $(X,\nabla)$ are exactly the images of geodesics on $\mathbb R_+$. In particular, starting at $1\in R_+$, moving in the direction of $-\partial/\partial x$, we get a geodesic $\gamma$ on $(X,\nabla)$ which is only defined for $t\in [0,1)$. This means that, although $X$ is compact, $(X,\nabla)$ is not geodesically complete. The holonomy of the generator of $\pi_1(X)$ defined by $\gamma|_{[0,1/2]}$ is $2\in \GL_1(\R)$, and an intuitive picture of what is happening is that each time the geodesics traverses $X$ its speed doubles (measured as the length of its tangent vector in a fixed frame near a point in $X$). Consequently, the speed diverges as $t\rightarrow 1$.  Note that $(X,\nabla)$ is not special. A well known conjecture by Markus states that a compact affine manifold is geodesically complete if and only if it is special. 
\end{exam}
\begin{exam}\label{ex:Hyperbolic}
    Let $\Omega$ be the cone in $\R^2\times \R$ defined by
    $$\Omega = \{(x_1,x_2,t)\in \R^2\times \R: Q(x_1,x_2,t):=x_1^2+x_2^2-t^2<0\}$$
    and consider the action on $\Omega$ generated by 
    $$\left(\begin{matrix} 
    2 & 0 & 0 \\ 
    0 & 2 & 0 \\
    0 & 0 & 2 
    \end{matrix}\right), \left(\begin{matrix} 
    \cosh(1) & 0 & \sinh(1) \\ 
    0 & 1 & 0 \\
    \sinh(1) & 0 & \cosh(1) 
    \end{matrix}\right) \text{ and } \left(\begin{matrix} 
    0 & -1 & 0 \\ 
    1 & 0 & 0 \\
    0 & 0 & 1 
    \end{matrix}\right).$$
    The latter two matrices preserve the level set of $Q$ and $X=\Omega/G$ is a compact affine manifold. 
\end{exam}

The topic of compact affine manifolds has been a very active research area at least since the 1970's (see for example \cite{Goldman}). However, in the context of the special Lagrangian torus fibrations in the SYZ-conjecture, the base spaces are expected to have singularities. More precisely, (in the generic case) the base is expected to be a sphere equipped with an integral affine structure which extends to the complement of a co-dimension 2 set. We will now look closer at a family of such objects. 

\subsection{Singular integral affine structures on reflexive polytopes}\label{sec:Polytopes}
Let $M_\R$ be a real vector spaces of dimension $d+1$, $N_\R=(M_\R)^*$, $M\subset M_\R$ a lattice and $N\subset N_\R$ the dual lattice. A reflexive polytope $\Delta\subset M_\R$ is a lattice polytope (i.e. the convex hull of a set of lattice points) such that its dual 
$$ \Delta^\vee = \{ n\in N_\R: \langle m,n \rangle \leq 1 \text{ for all } x\in \Delta\}\subset N_\R$$
is also a lattice polytope. For each facet $\sigma$ of $\Delta$ we will let $n_\sigma$ denote the corresponding vertex of $\Delta^\vee$ and for each facet $\tau$ of $\Delta^\vee$ we will let $m_\tau$ denote the corresponding vertex of $\Delta$. We will use $\sigma^\circ$ and $\tau^\circ$ to denote the relative interior of $\sigma$ and $\tau$, respectively and $\Star(m)$, for $m\in \partial \Delta$ to denote the union of all closed faces in $\partial \Delta$ containing $m$. 
\begin{exam}\label{ex:Polytope}
    Note that $\Delta$ is homeomorphic to the unit sphere. We will now explain how to a define a natural distinguished atlas on a large subsets of the boundary $\partial\Delta$ of $\Delta$. First of all, for each facet $\tau$ of $\Delta^\vee$, pick an open set $U_\tau$ in $\Star(m_\tau)$ such that the sets in the collection $\{U_\tau\}_\tau$ are pairwise disjoint and $\cup U_\tau$ covers a large part (say everything except for a (d-2)-dimensional subset) of the $d-1$ skeleton in $\partial\Delta$. The distinguished atlas will consist of one coordinate chart on $U_{\tau}$ for each facet $\tau$ of $\Delta^\vee$ and one coordinate chart on $\sigma^\circ$ for each pair of facets $\sigma$ and $\tau$ of $\Delta$ and $\Delta^\vee$ such that $m_\tau\in \sigma$ or, equivalently, $n_\sigma\in \tau$. 

    For each facet $\tau$, let $n_1,\ldots,n_d$ be a set of generators of the lattice $N\cap m_\tau^\perp$ in the orthogonal complement of $m_\tau$ and let $\alpha_{\tau}:\Star(m_\tau)\rightarrow \R^d$ be defined by
    $$ \alpha_\tau(m) = \left(\langle m,n_1 \rangle, \ldots, \langle m,n_d \rangle\right). $$
    We define the distinguished atlas as the set of coordinate charts $\{(U_\tau,\alpha_\tau|_{U_\tau})\}_\tau$ together with the coordinate charts $\{(\sigma^\circ,\alpha_\tau|_{\sigma^\circ})\}_{\sigma,\tau}$ where the latter family is indexed by all pairs of facets $\sigma$ and $\tau$ of $\Delta$ and $\Delta^\vee$, respectively, such that $m_\tau\in \sigma$. 
\end{exam}
\begin{lem}
    The coordinate charts in Example~\ref{ex:Polytope} define an integral affine structure. 
\end{lem}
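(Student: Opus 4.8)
The plan is to verify directly the two requirements in the first definition of an affine structure: that each $\alpha_\tau$ restricts to a homeomorphism onto an open subset of $\R^d$, and that the transition functions between overlapping charts are integral affine. The essential input will be the reflexivity of $\Delta$, which forces the vertices $m_\tau$ of $\Delta$ and $n_\sigma$ of $\Delta^\vee$ to be primitive lattice vectors and places every facet at lattice distance one from the origin.

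First I would establish that $\alpha_\tau$ is injective on $\Star(m_\tau)$. Since $n_1,\dots,n_d$ span $m_\tau^\perp$, the linear map $\alpha_\tau\colon M_\R\to\R^d$ has kernel exactly $\R m_\tau$, so $\alpha_\tau(m)=\alpha_\tau(m')$ forces $m'=m+t\,m_\tau$ for some $t\in\R$. Each of $m,m'$ lies on some facet $\sigma,\sigma'$ of $\Delta$ containing $m_\tau$, so $\langle m,n_\sigma\rangle=\langle m_\tau,n_\sigma\rangle=1$ and $\langle m',n_{\sigma'}\rangle=1$. Feeding $m'=m+t\,m_\tau$ into $\langle m',n_\sigma\rangle\le 1$ gives $1+t\le 1$, and into $\langle m,n_{\sigma'}\rangle\le 1$ gives $1-t\le 1$; hence $t=0$ and $m=m'$. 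Because the open star is a topological $d$-manifold and $\alpha_\tau$ is continuous and injective, invariance of domain shows $\alpha_\tau$ is a homeomorphism onto an open set, and the same holds for its restrictions to $U_\tau$ and to each $\sigma^\circ$ (on which $\alpha_\tau$ is an affine isomorphism of the hyperplane $H_\sigma=\{\langle\cdot,n_\sigma\rangle=1\}$ onto $\R^d$).

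The main step is the transition functions. By construction the $U_\tau$ are pairwise disjoint and the $\sigma^\circ$ are pairwise disjoint, so every nonempty overlap lies inside a single $\sigma^\circ$ and compares two charts $\alpha_\tau,\alpha_{\tau'}$ attached to two vertices $m_\tau,m_{\tau'}$ of the same facet $\sigma$. I would show that each such $\alpha_\tau$ carries the affine lattice $M\cap H_\sigma$ bijectively onto $\Z^d$. Indeed, since $n_\sigma$ is primitive one has the splitting $M=(M\cap n_\sigma^\perp)\oplus\Z m_\tau$ (write $m=(m-\langle m,n_\sigma\rangle m_\tau)+\langle m,n_\sigma\rangle m_\tau$), and since $m_\tau$ is primitive the pairing $M/\Z m_\tau\times(N\cap m_\tau^\perp)\to\Z$ is perfect; the chosen basis $n_1,\dots,n_d$ of $N\cap m_\tau^\perp$ therefore identifies $M/\Z m_\tau\cong M\cap n_\sigma^\perp$ with $\Z^d$ via $\alpha_\tau$. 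As $\alpha_\tau(m_\tau)=0$ and $M\cap H_\sigma=m_\tau+(M\cap n_\sigma^\perp)$, this gives $\alpha_\tau(M\cap H_\sigma)=\Z^d$, and likewise for $\alpha_{\tau'}$. Consequently $\alpha_{\tau'}\circ\alpha_\tau^{-1}$ is an affine bijection of $\R^d$ taking $\Z^d$ onto $\Z^d$, i.e.\ an element of $\GL_d(\Z)\rtimes\Z^d$.

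Finally, to match the definition of \emph{integral} affine (holonomy in $\SL_d(\Z)$ rather than merely $\GL_d(\Z)$), I would fix an orientation of $\partial\Delta\cong S^d$ and choose each lattice basis $n_1,\dots,n_d$ so that $\alpha_\tau$ is orientation preserving; the transition maps are then orientation preserving with integer linear part, hence have determinant $+1$ and lie in $\SL_d(\Z)\rtimes\Z^d$. I expect the computation of the transition functions to be the main obstacle, and within it the crux is the two lattice facts above — the splitting $M=(M\cap n_\sigma^\perp)\oplus\Z m_\tau$ and the perfectness of the pairing on $M/\Z m_\tau$ — both of which hinge precisely on the primitivity guaranteed by reflexivity.
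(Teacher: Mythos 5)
Your proof is correct and follows essentially the same route as the paper's: reduce to transition maps between the two charts a common facet $\sigma$ receives from vertices $m_\tau,m_{\tau'}$, and show that each $\alpha_\tau$ identifies $M\cap\affspan(\sigma)$ with $\Z^d$, the key input in both arguments being $\langle m_\tau,n_\sigma\rangle=1$ (you via the splitting $M=(M\cap n_\sigma^\perp)\oplus\Z m_\tau$ and the perfect quotient pairing, the paper via the equivalent observation that $n_\sigma\wedge n_1\wedge\cdots\wedge n_d=e_0\wedge\cdots\wedge e_d$, so that $n_\sigma,n_1,\dots,n_d$ generate $N$). You additionally verify injectivity of $\alpha_\tau$ on all of $\Star(m_\tau)$ and pin down the determinant sign by an orientation convention, two points the paper leaves implicit.
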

\begin{proof}
    (See also \cite[Lemma~3]{AH})
    It suffice to show that if $n_\sigma\in \tau\cap \tau'$, then $\alpha_\tau|_{\sigma^\circ} \circ (\alpha_{\tau'}|_{\sigma^\circ})^{-1}\in \SL_d(\Z)\rtimes \R^d$. First of all, let $n_1,\ldots,n_d$ be the generators for $N\cap m_\tau^\perp$ used in the definition of $\alpha_\tau$. Since $\langle m_\tau,n_\sigma \rangle = 1 \not= 0$ we get that $n_\sigma\notin m_\tau^\perp$, hence if $\langle m,n_\sigma \rangle=0$ and $\langle m,n_i \rangle=0$ for all $i$, then $m=0$. It follows that $\alpha_\tau$ define a non-degenerate affine map from the affine subspace spanned by $\sigma$ to $\mathbb R^d$. The same argument can be made for $\alpha_{\tau'}$ Consequently, $\alpha_\sigma \circ \alpha_{\sigma'}$ is affine. 
    
    To prove the lemma it thus suffices to verify that its linear part is in $\SL_d(\Z)$. To do this, we will show that both $\alpha_\sigma$ and $\alpha_{\sigma'}$ maps $M\cap \affspan(\sigma)$ to $\Z^d\in \R^d$. First of all, since $n_1,\ldots,n_d\in N$, we get that $\alpha_\tau(m)\in \Z^d$ if $m\in M$. For the converse, assume $m\in M\cap \affspan(\sigma)$ and $\alpha_\tau(m)\in \Z^d$. Let $e_0,\ldots,e_d$ be generators of $M$. We have
    $$ n_\sigma\wedge n_1 \wedge  \ldots \wedge n_d = \langle m_\tau,n_\sigma \rangle e_0\wedge \ldots \wedge e_d = e_0\wedge \ldots \wedge e_d, $$
    hence $n_\sigma,n_1,\ldots,n_d$ generate $N$. By assumption $\langle m,n_\sigma \rangle = 1\in \Z$ and $\langle m,n_i \rangle\in \Z$ for each $i$. Consequently, $m\in M$. The corresponding statement for $\alpha_{\tau'}$ is proved in the same way. 
\end{proof}
\begin{rmk}
    The definition of the integral affine structure in Example~\ref{ex:Polytope} involves a choice of sets $\{U_\tau\}_\tau$. However, note that this choice only influence the integral affine structure on the $(d-1)$-skeleton of $\partial \Delta$. More precisely, the choice of sets $\{U_\tau\}_\tau$ is equivalent to picking a ''boundary'' in each $(d-1)$-face of $\Delta$ which divides the face into a number of open regions, each one of these regions corresponding to one of the vertices of the face. 
    In particular, the parallel volume form of the integral affine structures defines a probability measure $\mu_M$ on $\partial\Delta$ independent of this choice. Similarly, we get a probability measure $\nu_N$ on $\partial\Delta^\vee$. 
\end{rmk} 
\begin{rmk}\label{rem:ChartsFromPhi}
    Assume $\Phi$ is a convex function on $M_\R$ such that $|\sup_{n\in \Delta^\vee} \langle m,n \rangle-\Phi|$ is bounded. Then, as argued in \cite{AH}, $\Phi$ dictates a natural choice for $\{U_\tau\}_\tau$, namely 
    $$ U_\tau = \Delta\setminus \partial^c\Phi^c(\Delta^\vee\setminus \tau^\circ) $$
    where $\Phi^c$ is the $c$-transform of $\Phi$ and $\partial^c\Phi^c$ its $c$-gradient with respect to the cost function $c(m,n) = -\langle m,n \rangle_{M,N} $ and $\langle \cdot,\cdot \rangle_{M,N}$ is the pairing of $M_\R$ and $N_\R$. 
    In sufficiently regular cases, this is equivalent to putting $U_\tau = \partial^c\Phi^c(\tau^\circ)$
\end{rmk}
\begin{rmk}
    The integral affine structure in Example~\ref{ex:Polytope} relates directly to a class of families of Calabi-Yau manifolds relevant to SYZ mirror symmetry. When $\Delta$ is Delzant, it defines a toric Fano manifold $Z_\Delta$ of complex dimension $d+1$. Let $s_0$ be the $(\C^*)^{d+1}$ invariant section of its anticanonical bundle and $s$ be a generic section of its anticanonical bundle and $Y$ be the family of Calabi-Yau manifolds given by
    \begin{equation} \label{eq:Family} Y = \{ts+s_0:t\in \C^*\}\subset Y. \end{equation}
    Then $\Delta^\vee$ arise as the bounded component of a limiting amoeba of $Y$ and the coordinate charts on $\Delta^\vee$ described by the procedure above correspond to a set of holomorphic coordinate charts on $Y$ which cover a large part of the Calabi-Yau manifolds near the central fiber (see \cite{LiFermat}, and also \cite{HZ,GS} for more details). Moreover, solvability of a certain Monge-Ampère equation on these has direct implications on the SYZ conjecture (see Corollary~\ref{cor:SYZ} below).
\end{rmk}

\subsection{Hessian Metrics}
\begin{defi}
Let $(X,\nabla)$ be an affine manifold. A Hessian metric on $(X,\nabla)$ is a Riemannian metric $g$ on $X$ which is locally the Hessian of a convex function, i.e. there is a collection of open sets $\{U_i\}$ covering $X$ and convex functions $\{\phi_i:U_i\rightarrow \R\}$ such that  
$$ g|_{U_i}=\nabla d\phi_i = \sum_{jk} \frac{\partial^2\phi}{\partial x_j\partial x_k}dx_j\otimes dx_k $$
for each $i$, where $x_1,\ldots,x_d$ are coordinates on $U_i$ from the distinguished atlas. 
\end{defi}
We will say that a collection of convex functions $\{\phi_i:U_i\rightarrow \R\}$ (not necessarily smooth) such that $\phi_i-\phi_j$ is affine, i.e. $\nabla d(\phi_i-\phi_j)=0$, for all $i$ is a \emph{weak} Hessian metric. If $(X,\nabla,g)$ is a special Hessian manifold and $u$ is a function on $X$ such that $\phi_i+u$ is convex for each $i$, then we define the Monge-Ampère measure $\MA(g+\nabla d\phi)$ as the Monge-Ampère measure of $\phi_i+u$ in the sense of Aleksandrov.

\begin{exam}
Let $\Omega$ and $G$ be as in Example~\ref{ex:AffineQuotient} and $\Phi$ a smooth strictly convex function on $\Omega$ such that $\Phi\circ g-\Phi$ is affine for all $g\in G$. Then the Riemannian metric on $\Omega$ defined by $\Phi_{ij}dx_i\otimes dx_j$ is $G$ invariant and its pushforward under the quotient map defines Hessian metric on $X$. In particular, if $\Omega=\mathbb R$, and $G=\mathbb Z$ acting on $\mathbb R$ by translations, we may pick $\Phi=x^2/2$, since $\Phi(x-m)-\Phi(x) = -mx+m^2/2$ is affine in $x$ for each $m\in \mathbb Z$. In Example~\ref{ex:NonComplete} we can pick $\Phi(x) = -\log(x)$, since $\log(2^mx)-\log(x)=m\log(2)$ is affine in $x$. In Example~\ref{ex:Hyperbolic} we can pick $\Phi=-\log(-Q)$.
\end{exam}

\begin{exam}\label{ex:PolytopeHessian}
    In Example~\ref{ex:Polytope}, any $\Phi$ defines a weak Hessian metric. On the charts given by $(\sigma^\circ,\alpha_\tau)$ we use $\Phi|_{\sigma^\circ}$ and on the charts given by $(U_\tau,\alpha_\tau)$ we use $(\Phi-n)|_{\sigma^\circ}$ for any vertex $n$ of $\tau$.
\end{exam}

\begin{thm}\label{thm:CompactHessianIsConvex}
Let $(X,\nabla,g)$ be a compact Hessian manifold. Then $X$ is a quotient $\Omega/G$, for some convex $\Omega\subset \R^d$ and $G$ as in Example~\ref{ex:AffineQuotient}. Moreover, $g=\nabla d\Phi$ for some proper function $\Phi:\Omega\rightarrow \R$.
\end{thm}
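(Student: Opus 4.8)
The plan is to pass to the universal cover, build a single convex potential there, show that the developing map identifies the cover with a convex domain, and finally adjust the potential by a linear function to make it proper.

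First I would set up the covering geometry. Let $\pi\colon\tilde X\to X$ be the universal cover and $G=\pi_1(X)$ the deck group. The flat torsion-free connection lifts to $\tilde X$, and because $\tilde X$ is simply connected the local affine charts assemble into a single affine immersion (developing map) $D\colon\tilde X\to\R^d$, equivariant for the holonomy representation $\rho\colon G\to\Aff(\R^d)=\GL_d(\R)\rtimes\R^d$; the action of $G$ on $\tilde X$ is free and properly discontinuous with $X=\tilde X/G$. Write $\tilde g=\pi^\ast g$, a $G$-invariant Hessian metric which is \emph{complete}, being the lift of a Riemannian metric on the compact manifold $X$. Next I would produce a global potential $\tilde\Phi$ on $\tilde X$ with $\nabla d\tilde\Phi=\tilde g$: the local potentials $\phi_i$ satisfy $\nabla d(\phi_i-\phi_j)=0$, so the $d\phi_i$ differ by parallel (hence closed) $1$-forms and form a cocycle valued in the local system of parallel covectors. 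Since $\tilde X$ is simply connected this system is trivial and $H^1_{\mathrm{dR}}(\tilde X)=0$, so after correcting the $\phi_i$ by affine functions the $d\phi_i$ glue to a single closed $1$-form, whose primitive is $\tilde\Phi$. As $\nabla d\tilde\Phi=\tilde g$ is positive definite, $\tilde\Phi$ is strictly convex in every affine chart, and along any flat geodesic $\gamma$ one has $\tfrac{d^2}{dt^2}\tilde\Phi(\gamma(t))=\tilde g(\dot\gamma,\dot\gamma)>0$.

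I would then record the equivariance that matches Example~\ref{ex:AffineQuotient}: for each $\gamma\in G$ the map $\gamma$ is affine and preserves $\tilde g$, so $\nabla d(\tilde\Phi\circ\gamma-\tilde\Phi)=\gamma^\ast\tilde g-\tilde g=0$, whence $\tilde\Phi\circ\gamma-\tilde\Phi$ is affine. This is exactly the compatibility condition on the potential required in that example, so once convexity of the image is established the quotient presentation will be of the stated form.

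The heart of the argument, and the step I expect to be the main obstacle, is to show that $D$ is injective with convex image $\Omega\=D(\tilde X)$. This is an affine analogue of the Cartan--Hadamard theorem, and it is precisely here that completeness of $\tilde g$ is indispensable (without completeness one can have non-convex, non-injective developing maps). The strategy is to use that $\tilde\Phi$ is strictly convex along flat geodesics together with completeness of $\tilde g$ to show that, from a fixed base point, every point of $\tilde X$ is joined by a flat geodesic defined for all parameter values and that $D$ carries these geodesics to straight line segments; this yields simultaneously that $D$ is a global diffeomorphism onto an open set and that $\Omega$ is convex. Having identified $\tilde X$ with $\Omega$ and $\tilde\Phi$ with a smooth strictly convex function $\Phi_0\colon\Omega\to\R$, the equivariance of $D$ makes $\Omega$ a $G$-invariant open convex subset of $\R^d$ on which $G$ acts affinely, so $X=\Omega/G$ is of the type in Example~\ref{ex:AffineQuotient}.

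Finally I would arrange properness. Since $\nabla\Phi_0$ is a local diffeomorphism, its image is open and nonempty; fixing $y_0=\nabla\Phi_0(x_0)$ and setting $\Phi=\Phi_0-\langle y_0,\cdot\rangle$ preserves the Hessian, $\nabla d\Phi=g$, and produces a function with a global minimum at $x_0$. A standard convexity argument (strict convexity along rays from $x_0$ forces $\Phi$ to be coercive) handles growth at infinity, while completeness of $g$ handles the finite boundary: if a sublevel set of $\Phi$ accumulated at a point of $\partial\Omega$, the straight segment to that boundary point would have finite $g$-length, contradicting completeness, so $\Phi\to+\infty$ along $\partial\Omega$. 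Hence $\Phi$ is proper, $\nabla d\Phi=g$, and the theorem follows. The two delicate points to verify carefully are the completeness-based convexity of $\Omega$ and this boundary blow-up of $\Phi$; both hinge on the completeness of the Hessian metric coming from compactness of $X$.
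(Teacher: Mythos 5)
Your overall architecture matches the paper's (which follows Shima--Yagi): pass to the universal cover, glue the local potentials into a single strictly convex $\widetilde{\Phi}$, and reduce everything to showing the developing map is injective with convex image. But that last step is exactly where your proposal stops. You correctly flag it as ``the heart of the argument,'' and then only assert that strict convexity of $\widetilde{\Phi}$ along flat geodesics plus completeness of $\tilde g$ yields the conclusion, without giving a mechanism. This is a genuine gap rather than a routine verification: the flat connection $\nabla$ is in general geodesically \emph{incomplete} even on compact affine manifolds (cf.\ Example~\ref{ex:NonComplete}), so no Cartan--Hadamard-type argument for $\nabla$ is available, and Riemannian completeness of $g$ says nothing directly about the affine geodesics of $\nabla$, which are the curves that must develop onto straight segments filling out a convex set. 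Some bridge between the two connections is indispensable.

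The paper's bridge is Lemma~\ref{lem:PhiInft}: along a maximal $\nabla$-geodesic $\gamma:[0,1)\to\Omega$ the $g$-length is infinite (by compactness of $(X,g)$, since otherwise $\gamma(t)$ would converge and the geodesic could be extended in an affine chart), and since $\frac{d^2}{dt^2}\Phi(\gamma(t))=g(\dot\gamma,\dot\gamma)$ this forces $\frac{d}{dt}\Phi(\gamma(t))\to+\infty$. Convexity of the developed image is then obtained by a broken-geodesic straightening argument: if $x_1$ and $x_3$ are each joined to $x_2$ by geodesic segments but not to each other, one produces two maximal geodesics in essentially opposite straightening directions, so the lemma forces $\langle d\Phi,v\rangle\gg0$ and $\langle d\Phi,-v\rangle\gg0$ near their respective endpoints, contradicting convexity of $\Phi$. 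You would need to supply an argument of this kind to close the gap. A secondary caveat: in your properness step, infinite $g$-length of a straight segment approaching $\partial\Omega$ does not by itself imply $\Phi\to+\infty$ along it (one can have a convex $f$ on $[0,1)$ with $\int_0^1\sqrt{f''}=\infty$ but $f$ bounded), so boundary blow-up also needs either the quantitative form of the lemma or the cocompact group invariance, not just completeness.
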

The proof of Theorem~\ref{thm:CompactHessianIsConvex} is mostly elementary, but it helps to introduce some terminology: Let $p:\Omega\rightarrow X$ be the universal covering space of $X$. We may pull back the affine structure and the Hessian metric on $X$ to get a Hessian structure on $\Omega$. The fact that $\Omega$ is simply connected implies the following:
\begin{itemize}
    \item The pullback of $g$ can be written as $\nabla d\Phi$ for some globally defined $\Phi:X\rightarrow \R$.
    \item $\Omega$ admits $d$ linearly independent parallel 1-forms.
\end{itemize}
Fix $x_0\in \Omega$.
The first of these points can be showed by defining $\Phi(x) = \int_{\gamma_{x_0,x}} \int_{\gamma_{x_0,x}} g$, where $\gamma_{x_0,x}$ is some curve in $\Omega$ from $x_0$ to $x$ and $\int_{\gamma_{x_0,x}} g$ is the 1-form given by integrating $g$ along $\gamma_{x_0,x}$. The second point can be showed by fixing a basis of the co-tangent space of $\Omega$ at $x_0$ and extending this by paralell transport. Both of these procedures only depend on the homotopy type of the $\gamma_{x_0,x}$. Since $\Omega$ is simply connected they are thus independent of $\gamma_{x_0,x}$. 

Let $m_1,\ldots,m_d$ be linearly independent parallel 1-forms on $\Omega$ and, as above, $x_0\in \Omega$. The \emph{developing map} at $x_0$ defined by $m_1,\ldots,m_d$ is the map $dev:\Omega\rightarrow \R$ given by
$$ dev(x) = \left(\int_{\gamma_{x_0,x}} m_1,\ldots,\int_{\gamma_{x_0,x}} m_d\right) $$
where as above $\gamma_{x_0,x}$ is any curve in $\Omega$ from $x_0$ to $x$. 

We will say that a geodesic segment $\gamma:[0,1]\rightarrow X$ (with respect to $\nabla$) is maximal if it can't be continued beyond time 1, i.e. there is no other geodesic $\gamma':[0,1+\epsilon]\rightarrow X$, $\epsilon>0$ such that $\gamma=\gamma|_{[0,1]}$. The following is an important property of compact Hessian manifolds:
\begin{lem}\label{lem:PhiInft}
    Assume $\gamma:[0,1]\rightarrow \Omega$ is a maximal geodesic segment. Then $\frac{d}{dt}\Phi(\gamma(t))\rightarrow +\infty$ as $t\rightarrow 1$. 
\end{lem}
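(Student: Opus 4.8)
The plan is to play off the convexity of $\Phi$ along the geodesic against the completeness of the pulled-back metric on the universal cover, ruling out the only alternative to the stated blow-up. First I would record the second-order computation. In an affine chart a $\nabla$-geodesic is a straight line, so $\ddot\gamma\equiv 0$ in these coordinates, and differentiating $\Phi$ twice along $\gamma$ gives
$$ \frac{d^2}{dt^2}\Phi(\gamma(t)) = \sum_{ij}\frac{\partial^2\Phi}{\partial x_i\partial x_j}\,\dot\gamma^i\dot\gamma^j = g(\dot\gamma(t),\dot\gamma(t)). $$
Since $g$ is a positive-definite metric and $\dot\gamma\neq 0$, the function $h(t):=\frac{d}{dt}\Phi(\gamma(t))$ is strictly increasing on $[0,1)$, hence admits a limit $L\in(-\infty,+\infty]$ as $t\to 1$. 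The content of the lemma is that $L=+\infty$, and I would argue this by contradiction, assuming $L<+\infty$.

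The next step is to convert finiteness of $L$ into a length bound. From $h'(t)=\|\dot\gamma(t)\|_g^2$ we get $\int_0^1\|\dot\gamma\|_g^2\,dt=L-h(0)<\infty$, and Cauchy--Schwarz on $[0,1]$ then bounds the $g$-length $\int_0^1\|\dot\gamma\|_g\,dt$. Thus $\gamma$ has finite length for the pulled-back Hessian metric $\tilde g=p^*g$ on $\Omega$.

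Now I would invoke completeness. Because $X$ is compact, $(X,g)$ is a complete metric space, and a Riemannian covering of a complete manifold is complete; hence $(\Omega,\tilde g)$ is complete. A curve of finite length in a complete Riemannian manifold is Cauchy for the induced distance and therefore converges, so $\gamma(t)\to x_*$ for some interior point $x_*\in\Omega$. Choosing an affine chart $\alpha$ around $x_*$, the tail of $\gamma$ is a straight segment in coordinates converging to $\alpha(x_*)$; such a segment extends affinely past its endpoint, producing a $\nabla$-geodesic on $[0,1+\epsilon]$ that restricts to $\gamma$. This contradicts maximality, so $L=+\infty$, which is the assertion.

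The main obstacle is the interplay between the two connections: the geodesic is taken with respect to $\nabla$, whereas the quantity I can control is measured by $g$. The bridge is the identity $h'=\|\dot\gamma\|_g^2$, which ties the $\nabla$-affine derivative of the potential to the $g$-speed; once this is in hand, it is the completeness of the \emph{cover} (rather than of $X$, or geodesic completeness of $\nabla$, which may fail as in Example~\ref{ex:NonComplete}) that forces the limit point into the interior of $\Omega$, where affine geodesics automatically continue.
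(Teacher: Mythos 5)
Your proof is correct and takes essentially the same route as the paper: both identify $\frac{d^2}{dt^2}\Phi(\gamma(t))=g(\dot\gamma,\dot\gamma)$, deduce that finiteness of $\lim_{t\to 1}\frac{d}{dt}\Phi(\gamma(t))$ would force the $g$-length of $\gamma$ to be finite, and then use completeness (compactness of $X$) plus the affine continuation of the limit point to contradict maximality. Your explicit Cauchy--Schwarz step is in fact a cleaner rendering of the paper's pointwise inequality $f''\geq (f'')^{1/2}$, which as literally written only holds where $f''\geq 1$; otherwise the two arguments are the same, one phrased directly and one by contradiction.
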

\begin{proof}
    Let $f(t)= \Phi(\gamma(t))$. We have 
    \begin{eqnarray*}
        \frac{d}{dt}\Phi(\gamma(t)) & = & f'(t) \\
        & = & f'(0)+\int_0^t f''(s)ds  \\
        & \geq & f'(0)+\int_0^t (f''(s))^{1/2} ds \\
        & = & |\gamma|_g \\
        & = & +\infty
    \end{eqnarray*}
    where $|\gamma|_g$ denotes the length of $\gamma$ with respect to $g$. To see that this is infinite, note that since $(X,g)$ is a compact Riemannian manifold, $|\gamma|_g$ is either infinite, or $\gamma(t)$ has a limit $x$ as $t\rightarrow 1$. But in the latter case, we can pick affine coordinates around $x$ and continue $\gamma$ as a stright line in these coordinates. This contradict the maximality of $\gamma$.
\end{proof}
\begin{proof}[Proof of Theorem~\ref{thm:CompactHessianIsConvex}, following \cite{ShimaYagi}]
The theorem is proved by showing that the developing map is injective and that its image is convex. To do this, it suffices to prove that if $x_1, x_2, x_3 \in \Omega$, and there are geodesics $\gamma_{x_1,x_2}$ and $\gamma_{x_2,x_3}$ connecting $x_1$ with $x_2$ and $x_2$ with $x_3$, respectively, then there is a geodesics connecting $x_1$ with $x_3$. Since every pair of points can be connected by a piecewise geodesic segment the theorem then follows by repeated applications of this statement. More preciesly, if $\gamma:[0,1]\rightarrow \Omega$ is a piece geodesic segment which is geodesic on $[t_1,t_2]$ and $[t_2,t_3]$ then the statement above can be used to modify $\gamma$ on $[t_1,t_3]$ to a piecewise geodesic which is geodesic on $[t_1,t_3]$. 

Assume for a contradiction that there is no geodesic from $x_1$ to $x_3$. Without loss of generality we may assume that there is a geodesic $\gamma_t$ from $\gamma_{x_1,x_2}(1-t)$ to $\gamma_{x_2,x_3}(t)$ for all $t<1$. Moreover, by considering the geodesic emanating from $x_1$ in the direction given by $\dot\gamma_{x_2,x_3}-\dot\gamma_{x_1,x_2}$ and the geodesic emanating from $x_3$ in the direction $\dot\gamma_{x_1,x_2}-\dot\gamma_{x_2,x_3}$ we get two maximal geodesic segments $\gamma_1:[0,a)\rightarrow \mathbb R$, $\gamma_2:[0,b)\rightarrow \mathbb R$ where $a+b\leq 1$. By Lemma~\ref{lem:PhiInft}, 
$$\langle d\Phi, \dot\gamma_1\rangle  = \langle d\Phi, \dot\gamma_{x_2,x_3}-\dot\gamma_{x_1,x_2}\rangle >>0$$ 
close to $\gamma_1(a-\epsilon)$ in $\Omega$ and 
$$\langle d\Phi \dot\gamma_1 \rangle = \langle d\Phi, \dot\gamma_{x_1,x_2}-\dot\gamma_{x_2,x_3} \rangle >>0$$ close to $\gamma_2(b-\epsilon)$ in $\Omega$. This contradicts convexity of $\Phi$. 
\end{proof}

\begin{rmk}
An alternative way of thinking about Theorem~\ref{thm:CompactHessianIsConvex} is that if $(X,\nabla,g)$ is a compact Hessian manifold, then any curve on $X$ can be continuously deformed to a geodesic (with respect to $\nabla$) while keeping its endpoints fixed. 
\end{rmk}

\section{Duality of Hessian Manifolds}\label{sec:DualHessianStructures}
Let $U$ be a convex subset of $\R^d$, $\Phi:U\rightarrow \R$ be smooth and strictly convex and put $g := \sum_{ij} \frac{\partial^2\Phi}{\partial x_i\partial x_j}dx_i\otimes dx_j$. Let $\Psi:\partial\Phi(U)\rightarrow \R$ be the Legendre transform of $\Phi$ and $y_1,\ldots,y_d$ be the (non-affine) coordinates on $U$ defined by the diffeomorphism $\partial\Phi$, i.e. $(y_1,\ldots,y_d)=\partial\Phi(x_1,\ldots,x_d)$. Let $\Phi_{ij} = \frac{\partial^2\Phi}{\partial x_i\partial x_j}$, $\Psi_{ij} = \frac{\partial^2\Psi}{\partial y_i\partial y_j}$ and $(\Phi^{ij})$ denote the inverse of the matrix $(\Phi_{ij})$. Using \eqref{eq:HessLeg} we get
\begin{eqnarray}
    g & = & \sum_{ij} \Phi_{ij}dx_i\otimes dx_j \nonumber \\
    & = & \sum_{ij} \Phi^{ij}\circ \partial \Phi^* dy_i\otimes dy_j \nonumber \\
    & = & \sum_{ij} \Psi_{ij}\circ \partial \Phi dy_i\otimes dy_j.
\end{eqnarray}
This suggest there is another affine structure on $U$, different from the one inherited from $\R^d$, and for which $g$ is still a Hessian metric. More precisely, if $\nabla'$ is the affine structure on $U$ defined by pulling back the affine structure on $(\R^d)^*$ by $\partial\Phi$, then $(U,\nabla',g)$ is a Hessian metric and a potential for $g$ is given by $\Psi\circ \partial\Phi$. 

This operation can be performed locally on a Hessian manifold $(X,\nabla,g)$ to define a \emph{dual Hessian structure} $(X,\nabla',g)$. 
\begin{lem}\label{lem:DualConnection}
Let $(X,\nabla,g)$ be a Hessian manifold and assume $\{U_i,\alpha_i\}$ is a set of affine coordinate charts covering $X$ such that for each $i$ $\alpha_i(U_i)\subset \R^d$ is convex and there is $\phi_i:U\rightarrow \R$ such that $g|_{U_i}=\nabla d\phi_i$. Then
\begin{enumerate}
    \item The affine structure $\nabla_i$ on $U_i$ defined by pulling back the affine structure on $(\R^d)^*$ by $\partial(\phi_i\circ \alpha^{-1})$ has the following form: $\nabla'_i = 2\nabla_g-\nabla$, where $\nabla_g$ is the Levi-Civita connection of $g$. Consequently, the locally defined connections $\{\nabla'_i\}$ glue together to form a global connection $\nabla'$ on $X$.  
    \item The object $(X,\nabla',g)$ is a Hessian manifold and a set of potentials of $g$ is given by $\{\phi^*_i\circ \partial\phi:U_i\rightarrow \R\}$.
\end{enumerate} 
\end{lem}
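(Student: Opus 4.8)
The plan is to reduce everything to a local computation in the affine coordinates supplied by $\alpha_i$, and then to observe that the resulting formula for $\nabla'_i$ is manifestly independent of the chart, which yields both the gluing in (1) and the Hessian property in (2). Throughout I abbreviate $\Phi:=\phi_i\circ\alpha_i^{-1}$ and write $x_1,\dots,x_d$ for the affine coordinates on $U_i$, so that $\nabla$ has vanishing Christoffel symbols and $g_{ij}=\Phi_{ij}$. Since $g$ is a Riemannian metric, $(\Phi_{ij})$ is positive definite, hence $\Phi$ is smooth and strictly convex and $\partial\Phi$ is a diffeomorphism onto its image; the functions $y_c:=\Phi_c=\partial\Phi/\partial x_c$ are therefore genuine coordinates, and by construction they are affine for $\nabla'_i$.

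For part (1) I would compute the Christoffel symbols of $\nabla'_i$ in the $x$-coordinates. Since $\nabla'_i$ is flat with the $y_c$ as affine coordinates, the change-of-variables formula for a connection that is trivial in one chart gives
\begin{equation*}
\Gamma^{\prime\,k}_{ij} \;=\; \sum_c \frac{\partial x_k}{\partial y_c}\,\frac{\partial^2 y_c}{\partial x_i\,\partial x_j} \;=\; \sum_c \Phi^{kc}\,\Phi_{cij},
\end{equation*}
using $\partial y_c/\partial x_k=\Phi_{ck}$ and hence $\partial x_k/\partial y_c=\Phi^{kc}$. On the other hand, the Levi-Civita symbols of $g=\sum\Phi_{ij}\,dx_i\otimes dx_j$ are $\Gamma^{g,k}_{ij}=\tfrac12\sum_l\Phi^{kl}(\partial_i\Phi_{lj}+\partial_j\Phi_{li}-\partial_l\Phi_{ij})$, and here the three third-order derivatives coincide by symmetry of mixed partials, collapsing the bracket to $\Phi_{ijl}$. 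Thus $\Gamma^{g,k}_{ij}=\tfrac12\sum_l\Phi^{kl}\Phi_{ijl}=\tfrac12\Gamma^{\prime\,k}_{ij}$, and since $\nabla$ has vanishing symbols in these coordinates this reads exactly $\nabla'_i=2\nabla_g-\nabla$. The right-hand side is an intrinsic, globally defined connection (an affine combination of the two global connections $\nabla_g$ and $\nabla$, with coefficients summing to $1$), so the locally defined $\nabla'_i$ agree on overlaps and glue to a single $\nabla'$.

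For part (2) I would first note that $\nabla'$ is flat and torsion-free: on each $U_i$ it is the pullback of the standard flat connection on $(\R^d)^*$ under the diffeomorphism $\partial\Phi$, and these properties are preserved by pullback; equivalently, the $y$-coordinate charts form an affine atlas. It remains to identify $g$ as a $\nabla'$-Hessian. Substituting $dx_i=\sum_k\Phi^{ik}\,dy_k$ and using $\sum_i\Phi_{ij}\Phi^{ik}=\delta^k_j$ gives $g=\sum_{kl}\Phi^{kl}\,dy_k\otimes dy_l$, where the coefficients, viewed as functions of $y$, are $\Phi^{kl}\circ\partial\Phi^*$ because $x=\partial\Phi^*(y)$. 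By the Legendre identity \eqref{eq:HessLeg} this equals $(\Phi^*)_{kl}$, so in the affine coordinates $y$ the metric reads $g=\sum_{kl}(\Phi^*)_{kl}\,dy_k\otimes dy_l=\nabla'd\Psi$ with $\Psi=\Phi^*$. As a function on $U_i$ this potential is $\Phi^*\circ\partial\Phi=\phi_i^*\circ\partial\phi_i$, and since its $\nabla'$-Hessian equals the positive-definite $g$ it is strictly convex for $\nabla'$; thus $\{\phi_i^*\circ\partial\phi_i\}$ exhibits $g$ as a Hessian metric for $\nabla'$.

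The computational heart — and the only place the Hessian hypothesis is really used — is the identity $\Gamma^{\prime}=2\Gamma^{g}$, which hinges on the total symmetry of the third derivatives $\Phi_{ijk}$; once this is in hand the gluing and the Hessian property follow formally. The one point demanding genuine care is the bookkeeping between the two coordinate systems, that is, keeping track of which quantities are evaluated at $x$ and which at $y=\partial\Phi(x)$, in particular the composition with $\partial\Phi^*$ in \eqref{eq:HessLeg}; this is routine but easy to garble.
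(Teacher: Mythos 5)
Your proof is correct. For part (2) it follows essentially the same route as the paper, which derives the Hessian property from the Legendre-transform discussion preceding the lemma (the substitution $dx_i=\sum_k\Phi^{ik}dy_k$ together with the identity \eqref{eq:HessLeg} giving $g=\sum(\Phi^*)_{kl}\,dy_k\otimes dy_l$ with potential $\Phi^*\circ\partial\Phi$). For part (1) the paper does not give an argument at all but defers to \cite{ShimaYagi} and \cite{ShimaBook}; your Christoffel-symbol computation $\Gamma^{\prime\,k}_{ij}=\sum_c\Phi^{kc}\Phi_{cij}=2\Gamma^{g,k}_{ij}$, resting on the total symmetry of $\Phi_{ijk}$, supplies exactly the missing local verification, and your observation that $2\nabla_g-\nabla$ is an intrinsically defined connection (an affine combination of connections with coefficients summing to $1$) correctly yields the gluing. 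The net effect is that your write-up is self-contained where the paper is not, at the cost of carrying out the coordinate bookkeeping between $x$ and $y=\partial\Phi(x)$ explicitly; no step is missing or incorrect.
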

We will refer to \cite{ShimaYagi} or \cite{ShimaBook} for a proof of the first bullet point above. Assuming the first bullet point, the second bullet point then follows from the discussion preceding Lemma~\ref{lem:DualConnection}. 

The Hessian manifold $(X,\nabla',g)$ is called the dual of $(X,\nabla,g)$. 


\subsection{Duals of compact Hessian manifolds}\label{sec:DualCptCase}
Let $(X,\nabla,g)$ be a compact Hessian manifold. By Theorem~\ref{thm:CompactHessianIsConvex} we can assume $(X,\nabla,g)$ is of the form $\Omega/G$ for some convex $\Omega$ and $G$ as in Example~\ref{ex:AffineQuotient}, equipped with a smooth, strictly convex function $\Phi:\Omega\rightarrow \R$ whose Hessian is the pullback of $g$. 
Note that $\partial\Phi$ defines a diffeomorphism from $\Omega$ to $\partial\Phi(\Omega)$. We can define an action of $G$ on $\partial \Phi(\Omega)\subset (\R^d)^*$ by pushing forward the action of $G$ on $\Omega$, in other words:
$g(\partial\Phi(m)) = \partial\Phi(m')$ if $g(m)=m'$.  
By construction, $\partial\Phi$ defines an equivivariant diffeomorphism from $\Omega$ to $\partial\Phi(\Omega)$, hence $\Phi(\Omega)/G$ is a compact smooth manifold. We can equip this with the affine structure inherited from $(\R^d)^*$. 
\begin{lem}\label{lem:QuotientIsDual}
$\partial\Phi(\Omega)/G$ is affinely isomorphic to $(X,\nabla')$.
\end{lem}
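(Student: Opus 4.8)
The plan is to show that the gradient map $\partial\Phi$ is itself the affine isomorphism we seek, after checking that it descends to the quotient. Recall from Lemma~\ref{lem:DualConnection} that the dual connection $\nabla'$ is, by construction, the one for which the local gradient maps are affine. Globally on $\Omega$ (obtained from Theorem~\ref{thm:CompactHessianIsConvex}) we may take the single chart $\mathrm{id}:\Omega\hookrightarrow\R^d$ with potential $\Phi$, so that the pullback $\tilde\nabla'$ of $\nabla'$ to $\Omega$ is precisely the affine structure obtained by transporting the standard affine structure of $(\R^d)^*$ along $\partial\Phi$. In other words, $\partial\Phi$ is tautologically an affine isomorphism from $(\Omega,\tilde\nabla')$ onto $\partial\Phi(\Omega)$ with its standard affine structure. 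It therefore remains to verify that this isomorphism is equivariant for affine $G$-actions on both sides, and then to descend it.

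The crucial step is to check that the pushed-forward $G$-action on $\partial\Phi(\Omega)$ acts by affine transformations of $(\R^d)^*$; this is also what makes the inherited affine structure on $\partial\Phi(\Omega)/G$ well defined. Fix $\gamma\in G$ and write $\gamma(x)=Bx+b$ with $B\in\GL_d(\R)$. Because $g$ is $G$-invariant and $\gamma$ preserves $\nabla$, we have $\nabla d(\Phi\circ\gamma)=\gamma^*\,\nabla d\Phi=\nabla d\Phi$, so $\Phi\circ\gamma-\Phi$ is an affine function, say $\Phi\circ\gamma-\Phi=\langle c_\gamma,\cdot\rangle+\mathrm{const}$. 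Differentiating via the chain rule gives $B^{T}\,(\partial\Phi\circ\gamma)=\partial\Phi+c_\gamma$, hence
\[ \partial\Phi(\gamma(x))=(B^{T})^{-1}\partial\Phi(x)+(B^{T})^{-1}c_\gamma. \]
Thus $\gamma$ acts on $\partial\Phi(\Omega)$ by the affine map with linear part $(B^{T})^{-1}$ (the contragredient of $B$, as expected on the dual space) and translation $(B^{T})^{-1}c_\gamma$. I expect this computation to be essentially the only real content of the proof; everything else is formal descent.

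Finally I would assemble the descent. The displayed formula says exactly that $\partial\Phi$ intertwines the $G$-action on $\Omega$ with the affine $G$-action on $\partial\Phi(\Omega)$, which is the equivariance built into the definition of the latter. Since the action on $\Omega$ is free and properly discontinuous with compact quotient and $\partial\Phi$ is an equivariant diffeomorphism, the same holds for the affine action on $\partial\Phi(\Omega)$; hence $\partial\Phi(\Omega)/G$ is a compact smooth manifold carrying a well-defined affine structure inherited from $(\R^d)^*$, and $\partial\Phi$ descends to a diffeomorphism $X=\Omega/G\to\partial\Phi(\Omega)/G$. Because $\partial\Phi$ is affine for $\tilde\nabla'$ on the source and for the standard structure on the target, and both structures are $G$-invariant, the descended map is an affine isomorphism $(X,\nabla')\to\partial\Phi(\Omega)/G$, as claimed. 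The only point requiring a word beyond the computation above is the compatibility of the two descriptions of $\nabla'$ on $\Omega$, which is immediate once one notes that $\tilde\nabla'$, being $G$-invariant, descends to a connection on $X$ that agrees chart-by-chart with the $\nabla'$ of Lemma~\ref{lem:DualConnection}.
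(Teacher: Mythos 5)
Your proof is correct and follows essentially the same route as the paper's: identify $\partial\Phi$ as the (equivariant) map realizing the isomorphism, note that locally it coincides with the gradient of the local potentials $\phi_i$ so that it pulls back the standard affine structure on $(\R^d)^*$ to $\nabla'$ by the very definition in Lemma~\ref{lem:DualConnection}, and then descend to the quotients. The one substantive addition is your explicit chain-rule computation showing that the pushed-forward $G$-action on $\partial\Phi(\Omega)$ is affine with linear part $(B^{T})^{-1}$; the paper leaves this point implicit when it equips $\partial\Phi(\Omega)/G$ with the structure inherited from $(\R^d)^*$, so making it explicit is a genuine (if minor) improvement rather than a divergence.
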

\begin{proof}
    Let $\{U_i\}$ be a set of open sets in $\Omega$ such that the quotient map $p:\Omega\rightarrow X$ restricts to a homeomorphism on $U_i$ for each $i$. For each $i$, we may pick $\phi_i:=\Phi|_{U_i}\circ (p|_{U_i})^{-1}$ as a potential for $g$ on $p(U_i)$. As noted above, $\partial\Phi$ defines a diffeomorphism from $X$ to $\partial\Phi(\Omega)/G$. Restricting to $U_i$, this is exactly the map defined by $\partial\phi_i$. Consequently, the pullback of the connection on $\partial\Phi(\Omega)/G$ is exactly $\nabla'$. 
\end{proof}

We will say that two Hessian metrics $g$ and $g'$ on $(X,\nabla)$ lies in the same \emph{class} if $g-g'=\nabla d\phi$ for some smooth function $\phi:X\rightarrow \R$. It turns out that both the set $\partial\Phi(\Omega)$ and the action above only depend on the class of $g$. 

\begin{thm}\label{thm:DualQuotient}
    Assume $(X,\nabla,g)$ is a compact Hessian manifold and $g'$ is a Hessian metric on $(X,\nabla)$ in the same class as $g$. Let $\Phi:\Omega\rightarrow \R$ and $\Phi':\Omega\rightarrow \R$ be potentials of $g$ and $g'$, respectively. Then $\partial\Phi(\Omega)=\partial\Phi'(\Omega)$ and the action of $G$ induced on $\partial\Phi(\Omega)$ by $\Phi$ is the same as the action induced by $\Phi'$. 
\end{thm}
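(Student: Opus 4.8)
The plan is to lift everything to the universal cover and reduce the statement to a robustness property of the gradient image under bounded perturbations. By Theorem~\ref{thm:CompactHessianIsConvex} we realize $X=\Om/G$ with $p\colon\Om\to X$ the universal covering, and we work with the pullbacks of $g,g'$, whose potentials are $\Phi,\Phi'$. Since $g$ and $g'$ lie in the same class, $g-g'=\nabla d\phi$ for some $\phi\in C^\infty(X)$; pulling back gives $\nabla d\Phi-\nabla d\Phi'=\nabla d\ti\phi$ with $\ti\phi:=\phi\circ p$. The two crucial features of $\ti\phi$ are that it is $G$-invariant (it descends to $X$) and, because $X$ is compact, \emph{bounded}. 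The identity $\nabla d(\Phi-\Phi'-\ti\phi)=0$ forces $\Phi-\Phi'-\ti\phi$ to be affine; since a potential of $g'$ on the simply connected $\Om$ is only well defined up to an affine function, I would first normalize $\Phi'$ within this ambiguity so that $\Phi'=\Phi-\ti\phi$. Any other choice translates $\partial\Phi'(\Om)$ and conjugates the induced $G$-action by the same translation, i.e.\ alters the dual picture only by an affine isomorphism of $(\R^d)^*$ (this is exactly the ``almost fixed'' phenomenon advertised in the introduction); with the normalized choice the two agree exactly.

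Next I would identify the gradient image intrinsically. Since $\Phi$ is smooth and strictly convex, $\partial\Phi$ is a diffeomorphism of $\Om$ onto its image, and I claim $\partial\Phi(\Om)=\Int(\dom\Phi^*)$. This is where Lemma~\ref{lem:PhiInft} enters: along any maximal geodesic $\gamma$ exiting $\Om$ one has $\tfrac{d}{dt}\Phi(\gamma(t))=\langle\partial\Phi(\gamma(t)),\dot\gamma\rangle\to+\infty$, which is precisely the steepness (essential smoothness) of $\Phi$, and this guarantees that the gradient map is a bijection from $\Om$ onto the interior of the domain of the Legendre transform $\Phi^*$. The same applies verbatim to $\Phi'$, which is itself a potential of a compact Hessian metric on $\Om/G$.

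The equality of images then becomes a short Legendre computation. From $\Phi'=\Phi-\ti\phi$ with $|\ti\phi|\le M$ we get $|\Phi'^*-\Phi^*|\le M$ pointwise, hence $\dom\Phi'^*=\dom\Phi^*$; taking interiors and using the previous paragraph yields $\partial\Phi'(\Om)=\partial\Phi(\Om)$. The content here is that the gradient image is an asymptotic invariant of $\Phi$, insensitive to bounded perturbations.

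Finally, for the action I would use the cocycle. Writing $\gamma(x)=B_\gamma x+b_\gamma$ for $\gamma\in G$, the descent of $g$ means $\Phi\circ\gamma-\Phi$ is affine with some linear part $c_\gamma$, and differentiating this relation shows that the induced action $\gamma\cdot\partial\Phi(x)=\partial\Phi(\gamma x)$ is the affine map $y\mapsto (B_\gamma^{-1})^{\top}(y+c_\gamma)$ on $\partial\Phi(\Om)$. Because $\ti\phi$ is $G$-invariant, $\Phi'\circ\gamma-\Phi'=(\Phi\circ\gamma-\Phi)-(\ti\phi\circ\gamma-\ti\phi)=\Phi\circ\gamma-\Phi$ has the \emph{same} linear part $c_\gamma$, so the action induced by $\Phi'$ is the identical affine map. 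I expect the main obstacle to be the second step: turning Lemma~\ref{lem:PhiInft} into the clean statement $\partial\Phi(\Om)=\Int(\dom\Phi^*)$, i.e.\ checking essential smoothness carefully, including the unbounded directions of $\Om$, since the rest is bookkeeping once the gradient image is known to depend only on the asymptotics of the potential.
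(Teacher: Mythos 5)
Your argument is correct, and for the first assertion it is essentially the paper's: both proofs reduce everything to the boundedness of $\Phi-\Phi'$ and the identification of the gradient image with the domain of the Legendre transform. The paper phrases this as $\overline{\partial\Phi(\Omega)}=\overline{\{\Phi^*<+\infty\}}$ and concludes by comparing closures of convex open sets, whereas you go through essential smoothness (via Lemma~\ref{lem:PhiInft}) to get the sharper statement $\partial\Phi(\Omega)=\Int(\dom\Phi^*)$ and then compare interiors; this is the same mechanism, and your explicit handling of the affine ambiguity in the choice of $\Phi'$ (normalizing so that $\Phi-\Phi'$ actually descends to $X$) is a point the paper's proof passes over silently when it asserts that $\Phi-\Phi'$ ``descends to a smooth function on $X$.'' For the second assertion your route genuinely differs: the paper shows the two induced actions agree by integrating the Hessians $\sum\Phi_{ij}\,dx_i\otimes dx_j$ and $\sum\Phi'_{ij}\,dx_i\otimes dx_j$ along a closed-up system of curves joining $m_0$, $gm_0$, $m_0'$ and $gm_0'$, using $G$-invariance of the metric and of $\Phi-\Phi'$, while you differentiate the cocycle relation $\Phi\circ\gamma-\Phi=\langle c_\gamma,\cdot\rangle+d_\gamma$ to exhibit the induced action as the explicit affine map $y\mapsto(B_\gamma^{\top})^{-1}(y+c_\gamma)$, which visibly depends only on $B_\gamma$ and on the linear part $c_\gamma$, unchanged when $\Phi$ is replaced by $\Phi-\tilde\phi$ with $\tilde\phi$ $G$-invariant. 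The two computations carry the same information (the curve integral of the Hessian is exactly the change of the gradient), but your version is more algebraic and makes the ``almost fixed'' structure of the dual affine manifold explicit, whereas the paper's is coordinate-free and avoids writing down the cocycle; either is a complete proof.
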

\begin{proof}
    By properness of $\Phi$ and convexity of $\Omega$ we get that $\Phi$ extends to a lsc convex function on $\R^d$ by putting $\Phi(x)=+\infty$ if $x\notin \Omega$. It follows that $\overline{\partial\Phi(\Omega)} = \overline{\{\Phi^*<+\infty \}}$. Similarly, $\overline{\partial\Phi'(\Omega)} = \overline{\{(\Phi')^*<+\infty \}}$. However, since $\Phi-\Phi'$ is bounded (it descends to a smooth function on $X$), $\overline{\{\Phi^*<+\infty \}}=\overline{\{(\Phi')^*<+\infty \}}$. Since two convex open sets are the same if their closures are the same, we get that $\partial \Phi(\Omega)=\partial \Phi'(\Omega)$. 

    Assume $m\in \Omega$, $g\in G$, $n_0=\partial\Phi(m)$ and $n_1=\partial\Phi(gm)$. In other words, with respect to the action induced by $\Phi$, $g$ maps $n_0$ to $n_1$. Let $m_0'\in \Omega$ satisfy $n_0=\partial \Phi'(m_0')$. We need to show that $n_1=\partial \Phi'(gm_0')$, and hence that with respect to the the action induced by $\Phi'$, $g$ maps $n_0$ to $n_1$.

    To do this, let $\gamma$ be a curve from $m_0$ to $gm_0$, $\gamma_0$ be a curve from $m_0'$ to $m_0$ and note that $g\circ\gamma_0$ is a curve from $gm_0'$ to $gm_0$. Let $\gamma'$ be the curve from $m_0'$ to $gm_0'$ given by first traversing $\gamma_0$, then traversing $\gamma$ and then traversing $g\circ \gamma_0$ backwards. The fact that $\sum_{ij}\Phi_{ij}dx_i\otimes dx_j$ is invariant under $G$ means 
    $$ \int_{\gamma_0} \sum_{ij}\Phi_{ij}dx_i\otimes dx_j = \int_{g\circ \gamma_0} \sum_{ij}\Phi_{ij}dx_i\otimes dx_j $$
    and the fact that $\Phi-\Phi'$ is invariant under $G$ means that $\int_\gamma \sum_{ij}(\Phi_{ij}-\Phi_{ij}')dx_i\otimes dx_j = 0$. We have 
    \begin{eqnarray*}
        \partial \Phi'(gm_0') & = & \partial \Phi'(m_0')+\int_{\gamma'} \Phi'_{ij} dx_i\otimes dx_j \\  
        & = & n_0 + \int_{\gamma_0} \Phi'_{ij} dx_i\otimes dx_j + \int_{\gamma} \Phi'_{ij} dx_idx_j - \int_{g\circ \gamma_0} \Phi'_{ij} dx_i\otimes dx_j \\ 
        & = & \partial \Phi(m_0)+\int_\gamma \Phi_{ij}dx_i\otimes dx_j \\
        & = & \partial \Phi(gm_0) \\
        & = & n_1.
    \end{eqnarray*}
\end{proof}

\begin{cor}\label{cor:DualCptCase}
Let $(X,\nabla,g)$ be a compact Hessian manifold and $(X,\nabla',g)$ its dual. Up to affine isomorphism, $\nabla'$  only depends on the class of $g$. In other words, if $g'$ is another Hessian metric on $(X,\nabla)$ such that $g-g'=\nabla d\phi$ for a smooth function $\phi$ on $X$, then the dual of $(X,\nabla,g')$ is $(X,\nabla',g')$. 
\end{cor}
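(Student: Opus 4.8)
The plan is to reduce everything to the concrete quotient model supplied by the preceding results and then quote Theorem~\ref{thm:DualQuotient} directly. By Theorem~\ref{thm:CompactHessianIsConvex} I may present $(X,\nabla)$ as $\Omega/G$ with $\Omega\subset\R^d$ convex and $G$ as in Example~\ref{ex:AffineQuotient}, and choose proper strictly convex potentials $\Phi,\Phi':\Omega\to\R$ whose Hessians are the pullbacks of $g$ and $g'$, respectively. The hypothesis $g-g'=\nabla d\phi$ with $\phi$ a smooth function on $X$ means exactly that $\Phi-\Phi'$ descends to $X$, hence is bounded and $G$-invariant; this is the only place the ``same class'' assumption enters.

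Next I would identify both duals with explicit quotients. By Lemma~\ref{lem:QuotientIsDual}, the dual connection $\nabla'$ of $(X,\nabla,g)$ is obtained by transporting, via the equivariant diffeomorphism $\partial\Phi:\Omega\to\partial\Phi(\Omega)$, the affine structure that $\partial\Phi(\Omega)/G$ inherits from $(\R^d)^*$; concretely, $\partial\Phi$ descends to an affine isomorphism $(X,\nabla')\to\partial\Phi(\Omega)/G$. Applying the same lemma to $g'$ produces its dual connection, which I will call $\nabla''$, together with an affine isomorphism $\partial\Phi':(X,\nabla'')\to\partial\Phi'(\Omega)/G$.

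It then remains to compare the two target quotients, and this is precisely the content of Theorem~\ref{thm:DualQuotient}: since $\Phi-\Phi'$ is bounded, $\partial\Phi(\Omega)=\partial\Phi'(\Omega)$ as subsets of $(\R^d)^*$, and the $G$-action induced there by $\Phi$ coincides with the one induced by $\Phi'$. Consequently $\partial\Phi(\Omega)/G$ and $\partial\Phi'(\Omega)/G$ are literally the same affine manifold $Q$, and composing the two affine isomorphisms yields $(\partial\Phi)^{-1}\circ\partial\Phi':(X,\nabla'')\to(X,\nabla')$, an affine isomorphism. This identifies the dual of $(X,\nabla,g')$ with $(X,\nabla',g')$ up to affine isomorphism, as claimed.

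I expect no serious obstacle here: all the analytic work, namely the properness and extension-by-$+\infty$ argument showing equality of the images together with the cocycle computation showing that the $G$-actions agree, has already been carried out in Theorem~\ref{thm:DualQuotient}. The only point requiring a little care is bookkeeping the direction of the affine isomorphism and checking that the identifications of Lemma~\ref{lem:QuotientIsDual} are compatible with a \emph{single} fixed quotient model $\Omega/G$, so that the same $\Omega$ and the same $G$ serve both $\Phi$ and $\Phi'$; this is guaranteed because $g$ and $g'$ are Hessian metrics for the one given affine structure $\nabla$.
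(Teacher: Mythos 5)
Your argument is correct and is exactly the paper's proof: the paper also deduces the corollary directly from Lemma~\ref{lem:QuotientIsDual} and Theorem~\ref{thm:DualQuotient}, with the quotient model coming from Theorem~\ref{thm:CompactHessianIsConvex}. You have merely spelled out the identifications and bookkeeping that the paper leaves implicit.
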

\begin{proof}
    Follows directly from Lemma~\ref{lem:QuotientIsDual} and Theorem~\ref{thm:DualQuotient}.
\end{proof}
\begin{rmk}
    In analogy with the complex geometric case, if $(X,\nabla,g)$ is a compact Hessian manifold, then it is possible to construct an 'affine line bundle' over $X$ such that the collection of local potentials of any Hessian metric in the same class as $g$ can be interpreted as a metric on the affine line bundle. See \cite{HO19} for details. 
\end{rmk}

\subsection{Duals of reflexive polytopes}\label{sec:DualPolytope}
In Section~\ref{sec:Polytopes} we described natural integral affine structures on large subsets of the boundary of reflexive polytopes. We will now describe their duals. The result we will present relies on some subtle regularity properties of the Hessian metric, hence might not be directly applicable, but arguably it gives a good intuitive picture. 

First of all, we fix a reflexive polytope $\Delta\subset M_\R$ and a convex function $\Phi$ on $M_\R$ such that $|\sup_{n\in \Delta^\vee} \langle m,n \rangle -\Phi(m)|$ is bounded. As in Remark~\ref{rem:ChartsFromPhi}, we will let $U_\tau = \Delta\setminus \partial^c\Phi^{-1}(\Delta^\vee\setminus \tau^\circ)$.

Define $\Phi^c:N_\R\rightarrow \R$ by 
\begin{equation} \label{eq:cTransformInDual} \Psi(n) = \Phi^c(n) = \sup_{m\in \partial\Delta} \langle m,n \rangle - \Phi(m). \end{equation}
Note that $\Delta^\vee$ is also a reflexive polytope and $(\Delta^
\vee)^\vee = \Delta$, hence $|\sup_{m\in (\Delta^\vee)^\vee} \langle m,n \rangle -\Psi(n)|$ is bounded. 
Following Remark~\ref{rem:ChartsFromPhi} again, we set $U_\sigma = \Delta^\vee\setminus \partial^c\Psi^{-1}(\Delta\setminus \sigma^\circ)$. 
Together with the recipe in Example~\ref{ex:Polytope}, this determines integral affine structures on large subsets of $\partial\Delta$ and $\partial\Delta^\vee$. 

We will assume that  the singular Hessian metrics induced by $\Phi$ and $\Psi$ are smooth. In addition, we will assume that $\partial^c\Phi:\Delta\rightarrow \Delta^\vee$ is a homeomorphism. The latter property has some very useful consequences:
\begin{itemize}
\item $\partial^c\Psi = (\partial^c\Phi)^{-1}$ is also a homeomorphism.
\item The definition of $U_\tau$ and $U_\sigma$ in Remark~\ref{rem:ChartsFromPhi} reduces to $U_\tau = \partial^c\Psi(\tau)$ and $U_\sigma = \partial^c\Phi(\sigma)$.
\item $\partial^c\Phi(\sigma)\subset \Star(n_\sigma)$ and $\partial^c\Psi(\tau)\subset \Star(m_\tau)$ for all facets $\sigma$ of $\Delta$ and $\tau$ of $\Delta^\vee$ (see \cite[Lemma~8]{AH}).
\item Let $m\in \sigma^\circ$. Then the generators of $N\cap m_\tau^\perp$ and $M\cap n_\sigma^\perp$ in the definitons of $\alpha_\tau$ and $\alpha_\sigma$ can be picked so that 
%
$$ \alpha_\sigma\circ \partial^c\Phi = \partial(\Phi\circ \alpha_\tau^{-1})\circ \alpha_\tau $$
and if $m\in U_\tau$ then the generators can be picked so that
%
$$ \alpha_\sigma\circ \partial^c\Phi = \partial((\Phi-n_\sigma)\circ \alpha_\tau^{-1})\circ \alpha_\tau $$ 
(see \cite[Corollary~3 and Lemma~14]{AH}). 
\end{itemize}
\begin{thm}\label{thm:DualPolytope}
Let $(X,\nabla,g)$ be the integral affine structure on $\partial\Delta$ induced by $\Phi$ and $(X,\nabla',g)$ its dual. Then, under the assumptions above, $(X,\nabla)$ is affinely isomorphic to the integral affine structure on $\Delta^\vee$ induced by $\Psi$ and $g$ coincides with the Hessian metric defined by $\Psi$. 
\end{thm}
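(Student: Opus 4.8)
The plan is to exploit the explicit local formula for the dual connection $\nabla'$ together with the compatibility identities between the charts $\alpha_\sigma$ on $\partial\Delta^\vee$ and $\alpha_\tau$ on $\partial\Delta$ listed in the last bullet point above. Recall from Lemma~\ref{lem:DualConnection} that locally the dual affine structure $\nabla'$ on a chart $(U_\tau,\alpha_\tau)$ of $\partial\Delta$ is obtained by pulling back the standard affine structure on $(\R^d)^*$ via the gradient map $\partial(\phi_\tau\circ\alpha_\tau^{-1})\circ\alpha_\tau$, where $\phi_\tau$ is the local potential of $g$ on that chart. On $U_\tau$ the local potential is $(\Phi-n_\sigma)|_{U_\tau}$ (by Example~\ref{ex:PolytopeHessian}), so this gradient map is precisely the $c$-gradient $\partial^c\Phi$ read in the charts $\alpha_\tau,\alpha_\sigma$, by the compatibility identity $\alpha_\sigma\circ\partial^c\Phi=\partial((\Phi-n_\sigma)\circ\alpha_\tau^{-1})\circ\alpha_\tau$. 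The upshot is that the chart $\alpha_\sigma$ on $\partial\Delta^\vee$ is, up to affine identification, the developing coordinate for $\nabla'$. First I would make this identification precise on each chart.

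Concretely, the argument proceeds as follows. First I fix a facet pair $(\sigma,\tau)$ with $m_\tau\in\sigma$ and work on the overlap where $\partial^c\Phi$ carries a neighborhood in $\partial\Delta$ to a neighborhood in $\partial\Delta^\vee$; here $\partial^c\Phi$ is a homeomorphism by hypothesis, and by the third bullet point it respects the star decomposition, i.e.\ $\partial^c\Phi(\sigma)\subset\Star(n_\sigma)$. Second, using the definition of $\nabla'$ from Lemma~\ref{lem:DualConnection} and the compatibility identities, I would check that the transition map expressing the $\alpha_\sigma$-coordinates in terms of the $\nabla'$-developing coordinates built from $\alpha_\tau$ is affine with linear part in $\SL_d(\Z)$ — this is exactly the content already verified for the $\Psi$-induced integral affine structure in the Lemma following Example~\ref{ex:Polytope}, now transported through $\partial^c\Phi$. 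Third, I would verify that these affine identifications glue across different facet pairs; the obstruction to gluing is controlled by the cocycle of transition functions, and consistency follows because both sides are governed by the same underlying convex data $\Phi$ and its $c$-transform $\Psi=\Phi^c$, with the lattice generators chosen compatibly as in the last two bullet points.

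Having matched the affine structures, I would then treat the metric. Since $\partial^c\Phi$ identifies $(X,\nabla')$ with the $\Psi$-induced structure on $\partial\Delta^\vee$, it remains to check that $g$ pushes forward to the Hessian metric $\nabla''d\Psi$ determined by $\Psi$ there. This is the Legendre-duality computation recorded at the start of Section~\ref{sec:DualHessianStructures}: in the dual coordinates $y=\partial\Phi(x)$ one has $g=\sum_{ij}\Psi_{ij}\circ\partial^c\Phi\,dy_i\otimes dy_j$ by \eqref{eq:HessLeg}, so $g$ is literally the Hessian of $\Psi$ in the $\alpha_\sigma$-charts, which is what Example~\ref{ex:PolytopeHessian} prescribes as the weak Hessian metric induced by $\Psi$ on $\partial\Delta^\vee$.

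The main obstacle I expect is \emph{not} the per-chart computation — that is essentially bookkeeping with the identities from \cite{AH} — but rather the global consistency across the $(d-2)$-skeleton where the charts meet and where the singular structure of the affine manifold lives. Specifically, the delicate point is that the sets $U_\tau$ and $U_\sigma$ were defined using $\Phi$ and $\Psi$ respectively, and one must confirm that $\partial^c\Phi$ matches the $\Phi$-chosen boundaries on $\partial\Delta$ with the $\Psi$-chosen boundaries on $\partial\Delta^\vee$ consistently; this is where the smoothness and homeomorphism hypotheses are genuinely used, since without them the $c$-gradient may be multivalued or fail to respect the stratification, and the identification of affine structures on the $(d-1)$-skeleton (the only place the choice of $\{U_\tau\}$ matters) could break down.
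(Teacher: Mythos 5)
Your proposal is correct and follows essentially the same route as the paper: use the homeomorphism $\partial^c\Phi$ as the global identification, invoke the last bullet point to recognize it locally as the classical gradient of the local potential (so that by Lemma~\ref{lem:DualConnection} the pulled-back charts $\alpha_\sigma\circ\partial^c\Phi$ realize $\nabla'$), and conclude the metric statement from the Legendre identity \eqref{eq:HessLeg}. The gluing/stratification concern you raise at the end is precisely what the standing hypotheses ($\partial^c\Phi$ a homeomorphism, $U_\tau=\partial^c\Psi(\tau)$, $U_\sigma=\partial^c\Phi(\sigma)$) are there to dispose of, so your account is if anything more explicit than the paper's.
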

\begin{proof}
    We may use the homeomorphism $\partial^c\Phi$ to identify the domain of the affine structure on $\partial\Delta$ with the domain of the affine structure of $\partial\Delta^\vee$. Moreover, by the last bullet point preceding this theorem, locally $\partial^c\Phi$ is the classical gradient of the potential of the Hessian metric, hence the theorem follows in the same way as Theorem~\ref{thm:DualQuotient}.
\end{proof}
\begin{rmk}
    The assumption that $\partial^c\Phi$ defines a homeomorphism is rather subtle. There does not seem to be any obvious ways to write down explicit examples $\Phi$ satisfying this. On the other hand, the second and third bullet point, and hence also the fourth, can be showed directly for the solution to important Monge-Ampère equations when $\Delta$ is the standard unit simplex or the unit cube (\cite[Lemma~16 and Lemma~17]{AH}).
\end{rmk}

\section{Pairings and Optimal Transport Problems}\label{sec:Pairings}
The purpose of the detailed discussion about dual Hessian structures in Section~\ref{sec:DualCptCase} and Section~\ref{sec:DualPolytope} are to provide heuristic optimal transport theoretic motivations for the functionals in \cite{HO19,HJMM,LiToric,AH}. In order to do this, the picture need to be completed with an extra structure in the form of a pairing between $(X,\nabla,g)$ and its dual. We will do this in the two cases discussed in Section~\ref{sec:DualCptCase} and Section~\ref{sec:DualPolytope}. 

\subsection{Compact Hessian Manifolds}
Assume $(X,\nabla,g)$ is a compact Hessian manifold. Pick $\Omega$ and $G$ as in Example~\ref{ex:AffineQuotient} so that $X=\Omega/G$ (c.f. Theorem~\ref{thm:CompactHessianIsConvex}) and let $\Phi:\Omega\rightarrow \R$ be a potential of the pullback of $g$. Identify $(X,\nabla')$ with $X^\vee:=\partial\Phi(\Omega)/G$ (c.f. Lemma~\ref{lem:QuotientIsDual}) and let $\Psi$ be a potential of $g$ ($\Psi$ is the Legendre transform of $\Phi$). We define the pairing $\langle \cdot,\cdot \rangle_\Phi:X\times X^\vee\rightarrow \R$ by
\begin{equation} \label{eq:PairingCompactCase} -\langle x,y \rangle_\Phi = \sup_{g\in G} \Phi(gx)+\Psi(y)-\langle gx,y \rangle = \sup_{g\in G} \Phi(x)+\Psi(gy)-\langle x,gy \rangle \end{equation}
where $\langle \cdot,\cdot \rangle$ is the standard pairing of $\R^d$ and $(\R^d)^*$ (note that the formula \eqref{eq:PairingCompactCase} describes a $G\times G$-invariant function on $\Omega\times \partial\Phi(x)$). Let $\mu$ be a probability measure on $X$, $\nu_X$ be the unique parallel probability measure on $X^\vee$ and consider the optimal transport problem defined by $(X,\mu), (X^\vee,\nu)$ and $c(x,y) = -\langle x,y\rangle_{\Phi,\Psi}$ we arrive at the dual problem 
\begin{equation} \label{eq:FuncCptCase} \int_X u \mu + \int_{X^\vee} u^c \nu \end{equation}
where 
$$ u^c(y) = \sup \langle x,y \rangle_\Phi - u(x) $$
which is the functional used in \cite{HO19} to produce solutions to the equation $\MA(g+\nabla du)=\mu$ on $X$.

\subsection{Reflexive Polytopes}
Assume $\Delta\subset M_\R$ is a reflexive polytope. Let $\mu_M$ be the parallel volume form on $\partial\Delta$ and $\nu$ a measure on $\partial\Delta^\vee$. Let $\langle \cdot,\cdot \rangle_{M,N}$ be the pairing of $M_\R$ and $N_\R$. Considering the optimal transport problem given by $(\partial\Delta_{reg},\mu_M)$, $(\partial\Delta_{reg},\nu)$ and $c(m,n) = -\langle m,n \rangle_{M,N}$, we arrive at the dual problem 
$$ \int_{\partial\Delta} \Phi \mu_M + \int_{\partial\Delta^\vee} \Phi^c \nu $$
or, equivalently,
$$ \int_{\partial\Delta} \Psi^c \mu_M + \int_{\partial\Delta^\vee} \Psi \nu $$
which is the functional used in \cite{HJMM, AH, LiToric}.

\section{Monge-Ampère Equations}\label{sec:MA}
The first general existence result for Monge-Ampère equations on special affine manifolds is due to Cheng and Yau:
\begin{thm}[\cite{ChengYau}, see also \cite{Del89}]\label{thm:CY}
    Let $(X,\nabla,g)$ be a compact special Hessian manifold and $\mu$ a volume form on $X$, then there is a unique smooth function $\phi$ on $X$ such that $g+\nabla d\phi$ is positive and 
    \begin{eqnarray} \det(g+\nabla d\phi) = \mu. \label{eq:CY} \end{eqnarray}
\end{thm}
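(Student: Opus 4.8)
The plan is to read \eqref{eq:CY} as a global real Monge--Amp\`ere equation and solve it by the method of continuity, with the second-order a priori estimate as the crux. First I would note that \emph{specialness} is exactly what makes the statement well posed: since the transition functions lie in $\SL_d(\R)$, the determinant of $g+\nabla d\phi$ taken in affine coordinates is chart-independent, and the parallel volume form $\mu_X$ lets me interpret \eqref{eq:CY} as an identity of densities. Using Theorem~\ref{thm:CompactHessianIsConvex} I would lift to the universal cover, writing $X=\Omega/G$ with $g=\nabla d\Phi$ for a proper convex $\Phi$ on the convex set $\Omega$; a solution $\phi$ then corresponds to a $G$-invariant function on $\Omega$ with $\Phi+\phi$ convex and
$$ \det\left(\frac{\partial^2(\Phi+\phi)}{\partial x_i\,\partial x_j}\right)=f, $$
where $f$ is the density of $\mu$ against $\mu_X$. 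Integrating over $X$ forces the total masses to agree --- both equal the volume of the dual region $\partial\Phi(\Omega)/G$ from Section~\ref{sec:DualCptCase} --- so I would normalize $\mu$ accordingly.

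Uniqueness I would settle first, as it is essentially algebraic. If $\phi_0,\phi_1$ both solve \eqref{eq:CY}, put $\phi_s=(1-s)\phi_0+s\phi_1$. The matrix $g+\nabla d\phi_s$ is affine in $s$ and $A\mapsto(\det A)^{1/d}$ is concave on positive symmetric matrices, so $s\mapsto(\det(g+\nabla d\phi_s))^{1/d}$ is concave with equal endpoint values $f^{1/d}$, whence $\det(g+\nabla d\phi_s)\geq f$ pointwise. But the total Monge--Amp\`ere mass $\int_X\det(g+\nabla d\phi_s)\,\mu_X$ is independent of $s$ (it is the fixed volume of the dual region of Section~\ref{sec:DualCptCase}), so the pointwise inequality is forced to be an equality; strict concavity then yields $\nabla d(\phi_1-\phi_0)=0$, so $\phi_1-\phi_0$ is affine and hence constant on the compact $X$. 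This is the expected uniqueness, up to the additive constant inherent in \eqref{eq:CY}.

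For existence I would deform along $\det(g+\nabla d\phi_t)=(1-t)\det(g)+t\,f$, whose right-hand side has $t$-independent total mass, with $\phi_0=0$ solving the case $t=0$. Let $S\subset[0,1]$ be the set of $t$ admitting a smooth solution with $g+\nabla d\phi_t>0$; then $0\in S$. Openness of $S$ is the implicit function theorem on H\"older spaces: the linearization of $\phi\mapsto\det(g+\nabla d\phi)$ at a solution is $\psi\mapsto\det(g+\nabla d\phi_t)\,(g+\nabla d\phi_t)^{ij}\psi_{ij}$, a second-order linear operator that is elliptic because $g+\nabla d\phi_t>0$ and has no zeroth-order term; on the compact $X$ its kernel is the constants, and Schauder theory with the maximum principle makes it an isomorphism modulo constants, so solutions persist under small perturbations of $t$.

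The main obstacle is closedness of $S$, which reduces to $t$-independent a priori estimates up to $C^{2,\alpha}$. I would climb the classical hierarchy. A $C^0$ bound on $\phi_t$ follows from the comparison principle together with the description of $X$ as a convex quotient, controlling the growth of $\Phi+\phi_t$ along the maximal geodesics of Lemma~\ref{lem:PhiInft}. The decisive step is the second-order estimate bounding the eigenvalues of $g+\nabla d\phi_t$ from above and uniformly away from $0$; this is precisely the technical heart of Cheng--Yau and exploits the affine invariance of the operator. Once \eqref{eq:CY} is uniformly elliptic with bounded coefficients, the Evans--Krylov theorem (or Calabi's third-order estimate) promotes the solution to $C^{2,\alpha}$, after which Schauder bootstrapping gives uniform $C^\infty$ bounds. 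These estimates make $S$ closed, so $S=[0,1]$ and $\phi=\phi_1$ is the desired solution.
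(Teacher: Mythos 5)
The paper does not prove this statement: Theorem~\ref{thm:CY} is imported verbatim from \cite{ChengYau} (see also \cite{Del89}), and the text only records a consequence of it in the subsequent remark. So there is no in-paper argument to compare against; what you have written is the classical continuity-method proof from the cited sources, and its overall architecture is correct. Your preliminary reductions are exactly right: specialness makes $\det(g+\nabla d\phi)$ chart-independent, Theorem~\ref{thm:CompactHessianIsConvex} turns the problem into a global convex potential on $\Omega$, and the total-mass normalization (the mass of $\mu$ must equal the volume of the dual domain of Section~\ref{sec:DualCptCase}, by Theorem~\ref{thm:DualQuotient} independent of the representative in the class) is a genuine compatibility condition that the paper's statement leaves implicit. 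Your uniqueness argument via concavity of $A\mapsto(\det A)^{1/d}$ is sound, though you should spell out the equality case: equality in Minkowski's determinant inequality forces $g+\nabla d\phi_0$ and $g+\nabla d\phi_1$ to be pointwise proportional, and equality of determinants then makes the factor $1$, giving $\nabla d(\phi_1-\phi_0)=0$ and hence constancy on the compact quotient.

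Two steps are thinner than they should be. First, in the openness argument you assert that the linearization $\psi\mapsto\det(A_t)A_t^{ij}\psi_{ij}$ is ``an isomorphism modulo constants'' by Schauder theory and the maximum principle; for a general non-divergence operator $a^{ij}\psi_{ij}$ the cokernel is spanned by a solution of the adjoint equation and need not consist of constants. What saves you here is the specific structure of the Monge--Amp\`ere linearization: the cofactor matrix $\det(A_t)A_t^{ij}$ is divergence-free in affine coordinates, so the operator is in divergence form and self-adjoint with respect to $\mu_X$, its image is exactly the mean-zero functions, and the $t$-derivative of your continuity path, $f-\det(g)$, has mean zero. This identity is also the cleanest way to see the $s$- and $t$-independence of total Monge--Amp\`ere mass that you invoke twice. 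Second, the uniform second-order estimate --- which you correctly identify as the decisive step --- is simply deferred to ``the technical heart of Cheng--Yau''; as written the proposal is an accurate roadmap rather than a proof, since without that estimate (a Pogorelov/Calabi-type computation exploiting the affine-invariant quantity $\log\det$ and compactness of $X$) closedness of $S$ is not established. Neither point is a wrong turn, but both would need to be filled in for a complete argument.
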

\begin{rmk}
    Theorem~\ref{thm:CY} has some interesting consequences. Let $\mu=\mu_X$ and let $\phi$ be a solution to \eqref{eq:CY}. Pull back $g+\nabla d\phi$ to the covering space $\Omega\subset \R^d$ and let $\Phi$ be a global potential. By Theorem~\ref{thm:CompactHessianIsConvex}, $\Phi$ is a proper function which solves the equation 
    $$ \det(D^2\Phi) = c $$ on $\Omega$. It then follows from a theorem by Jurgen, Pogorelov and Calabi that $\Omega=\R^d$ and $\Phi$ is a quadratic polynomial. It follows that $g+\nabla d\phi$ is a flat metric, $\nabla$ is the Levi-Civita connection of $g+\nabla d\phi$ and $G$ acts on $\Omega=\R^d$ by rigid motions, hence $(X,\nabla)$ is of the type described by Example~\ref{ex:IsometricQuotient}. 
\end{rmk}

Using the variational approach described in Section~\ref{sec:Pairings} Theorem~\ref{thm:CY} was generalized by Önnheim and the author to:
\begin{thm}[\cite{HO19}]
    Let $(X,\nabla,g)$ be a compact special Hessian manifold and $\mu$ a probability measure on $X$, then there is a unique function $\phi$ on $X$ such that \eqref{eq:CY} holds.
\end{thm}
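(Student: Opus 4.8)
The plan is to solve \eqref{eq:CY} variationally, interpreting it throughout in the sense of Aleksandrov as in the discussion of Hessian metrics, so that the $\phi$ we produce is a priori only a (weak Hessian) potential and only later seen to be as regular as $\mu$ allows. First I would reduce everything to the universal cover. By Theorem~\ref{thm:CompactHessianIsConvex} write $X=\Omega/G$ with a proper strictly convex $\Phi\colon\Omega\to\R$ whose Hessian is the pullback of $g$, and identify $(X,\nabla')$ with $X^\vee=\partial\Phi(\Omega)/G$ via Lemma~\ref{lem:QuotientIsDual}. Then solving $\MA(g+\nabla d\phi)=\mu$ on $X$ is equivalent to finding a $G$-periodic function $u$ on $\Omega$ (that is, a function $u$ on $X$) with $\Phi+u$ convex and $\MA(\Phi+u)=\tilde\mu$, where $\tilde\mu$ is the $G$-invariant lift of $\mu$. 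Compactness of $X$ forces the gradients $\partial(\Phi+u)$ to land in $\partial\Phi(\Omega)$, a set with compact $G$-quotient; this boundedness of gradients is the source of all the compactness used below.

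Next I would set up the variational problem using the dual optimal-transport functional \eqref{eq:FuncCptCase},
$$ \mathcal J(u) = \int_X u\,\mu + \int_{X^\vee} u^c\,\nu_X, \qquad u^c(y)=\sup_x\,\langle x,y\rangle_\Phi - u(x), $$
minimized over the class of functions $u$ on $X$ for which $\Phi+u$ is convex (equivalently, $u$ is $c$-convex for $c=-\langle\cdot,\cdot\rangle_\Phi$). Existence of a minimizer I would obtain by the direct method: $\mathcal J$ is convex and invariant under $u\mapsto u+\mathrm{const}$, so after the normalization $\int_X u\,\mu=0$ the uniform Lipschitz bound coming from the gradient constraint lets me apply Arzelà--Ascoli to a minimizing sequence, and lower semicontinuity of $\mathcal J$ under uniform convergence (the $c$-transform being continuous for such convergence) then produces a minimizer $u$.

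With a minimizer in hand I would identify its Euler--Lagrange condition with the Monge--Ampère equation and deduce uniqueness. The first variation of $\mathcal J$ at $u$ says exactly that an associated coupling of $\mu$ and $\nu_X$ is supported on the graph \eqref{eq:Graph} of the $c$-gradient, so that $(\partial^c u^c)_\#\nu_X=\mu$. Lifting this to $\Omega$, where the pairing $\langle\cdot,\cdot\rangle_\Phi$ reduces to the standard pairing, it is precisely the pushforward description of $\MA$ appearing in Corollary~\ref{cor:KnottSmith}, giving $\MA(\Phi+u)=\tilde\mu$ once the total masses are matched by the normalization constant $\rho$ of that corollary (here $\rho$ is fixed since both $\mu$ and $\nu_X$ are probability measures). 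Setting $\phi:=u$ solves \eqref{eq:CY}. Uniqueness up to an additive constant follows from strict convexity of $\mathcal J$ transverse to the constants, or equivalently from the Aleksandrov comparison principle; and the constant is the only ambiguity, since a globally defined locally affine function on the compact $X$ must be constant.

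The hard part will be the last step, and it is twofold. Because $\mu$ is an arbitrary probability measure rather than a volume form, no smooth solution can be expected and one must remain in the weak formulation throughout, which makes the passage from the first variation of $\mathcal J$ to the pointwise pushforward statement $(\partial^c u^c)_\#\nu_X=\mu$ delicate (the $c$-gradient is multivalued, single-valued only almost everywhere). Second, one must descend the model-case identification of Corollary~\ref{cor:KnottSmith} from $\Omega$ to the quotient in a genuinely $G$-equivariant way, checking that the pairing $\langle\cdot,\cdot\rangle_\Phi$---defined by a supremum over $G$ in \eqref{eq:PairingCompactCase}---is compatible both with the $G$-invariant measures $\tilde\mu$ and $\nu_X$ and with the weak Monge--Ampère operator on $X$, so that the equation descends cleanly. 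These equivariance and weak-regularity issues, rather than the compactness, are where the real work lies.
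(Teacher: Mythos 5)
Your proposal follows essentially the same route the paper indicates: the paper does not prove this theorem itself but attributes it to \cite{HO19} and explains in Section~\ref{sec:Pairings} that the argument is variational, based on optimizing the dual optimal-transport functional \eqref{eq:FuncCptCase} built from the pairing \eqref{eq:PairingCompactCase}, after passing to the universal cover via Theorem~\ref{thm:CompactHessianIsConvex} and Lemma~\ref{lem:QuotientIsDual}. Your sketch is a faithful expansion of exactly that strategy, and it correctly flags where the real work lies (the Lipschitz/compactness estimate, the weak identification of the first variation with the Aleksandrov Monge--Amp\`ere operator, and the $G$-equivariance of the pairing).
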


\begin{rmk}
    See also \cite{GuedjTo} for an approach to degenerate Monge-Ampère equation on compact Hessian manifolds based on the Perron method. 
\end{rmk} 

As before, let $\Delta$ be a reflexive polytope and $\Psi:N_\R$ satisfy $|\sup_{m\in \Delta} \langle m,n \rangle -\Psi|$ is bounded. We will use $\MA(\Psi)$ to denote the Monge-Ampère of the weak Hessian metric induced by $\Psi$ (c.f. Example~\ref{ex:PolytopeHessian})
\begin{thm}\cite{HJMM}\label{thm:symmetric}
Let $\Delta$ be the $(d+1)$-dimensional unit simplex
$$ \Delta = \conv\{-\sum e_i, (d+1)e_0,\ldots,(d+1)e_d\} $$
and assume $\nu$ is a probability measure on $\partial\Delta^\vee$ which is invariant under the symmetry group of $\Delta^\vee$, then there is a unique $\Psi:N_\R\rightarrow \R$ satisfying $|\sup_{m\in \Delta} \langle m,n \rangle -\Psi|$ such 
\begin{equation} \label{eq:MAPolytope} \MA(\Psi) = \nu. \end{equation}
\end{thm}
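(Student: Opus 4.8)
The plan is to produce $\Psi$ as the optimizer of the optimal transport dual functional of Section~\ref{sec:Pairings}, running the argument of the model Corollary~\ref{cor:KnottSmith} in the singular polytope setting. Write $h_\Delta(n)=\sup_{m\in\Delta}\langle m,n\rangle_{M,N}$ for the support function of $\Delta$, and let $\mathcal{C}$ be the class of convex functions $\Psi:N_\R\to\R$ with $|h_\Delta-\Psi|$ bounded; membership in $\mathcal{C}$ forces $\overline{\partial\Psi(N_\R)}=\Delta$, the analogue of the constraint $\overline{\partial\Phi(\R^d)}=\overline{Y}$ in Corollary~\ref{cor:KnottSmith}. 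On $\mathcal{C}$ I would minimize
\[ J(\Psi)=\int_{\partial\Delta^\vee}\Psi\,\nu+\int_{\partial\Delta}\Psi^c\,\mu_M, \]
which is convex and satisfies $J(\Psi+c)=J(\Psi)$ for additive constants $c$ (both $\mu_M$ and $\nu$ being probability measures), so one works in $\mathcal{C}$ modulo constants; up to sign this is the Kantorovich dual of the transport problem with source $\mu_M$, target $\nu$ and cost $c(m,n)=-\langle m,n\rangle_{M,N}$. The first step is then the analogue of the proof of Corollary~\ref{cor:KnottSmith}: at a minimizer the optimal plan concentrates on the graph of $\partial^c\Phi$, where $\Phi:=\Psi^c$, so $\partial^c\Phi$ transports $\mu_M$ to $\nu$ and hence $\MA(\Psi)=\nu$ in the sense of Aleksandrov (Example~\ref{ex:PolytopeHessian}).

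Existence of a minimizer I would obtain by the direct method. Since $J$ is convex and lower semicontinuous for local uniform convergence, and convex functions that are uniformly close to $h_\Delta$ are equi-Lipschitz on compact subsets, the Arzel\`a--Ascoli theorem extracts a locally uniformly convergent subsequence from any minimizing sequence, \emph{provided} the sequence stays uniformly bounded modulo constants. Everything therefore reduces to the coercivity estimate
\[ J(\Psi)\longrightarrow+\infty \qquad\text{as }\ \|\Psi-h_\Delta\|_\infty\to\infty, \]
taken after normalizing away the additive constant. Without it, transported mass can leak into the $(d-2)$-skeleton of $\partial\Delta^\vee$, where the integral affine structure of Example~\ref{ex:Polytope} degenerates, and the infimum need not be attained.

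This coercivity is the main obstacle, and it is exactly where the symmetry hypothesis is used. The non-coercive directions in $\mathcal{C}$ are the degenerations of $h_\Delta$ toward the proper faces $F$ of $\Delta^\vee$, rays $\Psi_t=h_\Delta+t\,u_F$ with $u_F$ a convex piecewise-linear function attached to $F$; along each such ray the one-sided derivative of $J$ at $t=0$ is an explicit linear functional of $\nu$, a barycenter-type quantity measuring the asymmetry of the $\nu$-mass in the direction singled out by $F$. Solvability amounts to all of these slopes being nonnegative, a combinatorial balancing condition on $\nu$ that is the tropical counterpart of the vanishing Futaki invariant in the toric Fano picture (compare \cite{BB,San}). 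Since the symmetry group $\Gamma\cong S_{d+2}$ of $\Delta^\vee$ permutes the faces of each fixed dimension transitively and fixes only the center, these destabilizing directions are all non-invariant; and restricting the variational problem to the $\Gamma$-invariant functions costs nothing, because $J$ is $\Gamma$-invariant (both $\mu_M$ and $\nu$ being $\Gamma$-invariant) and convex, so symmetrizing any competitor cannot increase $J$. On the invariant class the linear obstructions vanish identically and strict convexity then supplies the missing coercivity, yielding a minimizer.

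Uniqueness follows from strict convexity of $J$ modulo constants: two minimizers induce the same optimal transport plan, hence share the $c$-gradient $\nu$-almost everywhere, and so differ by an additive constant. The one remaining delicate point, independent of the symmetry combinatorics, is to interpret $\MA(\Psi)=\nu$ across the codimension-one walls where two charts of Example~\ref{ex:Polytope} meet; there I would invoke the last bullet point of Section~\ref{sec:DualPolytope}, which identifies $\partial^c\Phi$ locally with an honest gradient in the affine charts, so that the Monge-Ampère masses computed on either side of a wall agree and glue to the prescribed measure $\nu$.
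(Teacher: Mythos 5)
The paper itself contains no proof of this statement---it is quoted from \cite{HJMM}, and the only guidance the text gives is that the proof goes through the dual transport functional $\int_{\partial\Delta}\Psi^c\,\mu_M+\int_{\partial\Delta^\vee}\Psi\,\nu$ of Section~\ref{sec:Pairings}. Your overall strategy is therefore the intended one, but you have located the difficulty in the wrong place. Existence of a dual optimizer is not the hard part: after replacing $\Psi$ by its double $c$-transform you may restrict to $c$-convex competitors, which are uniformly Lipschitz on the compact sets $\partial\Delta$ and $\partial\Delta^\vee$ and hence form a compact family modulo additive constants; attainment then follows exactly as in the quoted Theorem~2.6, with no symmetry and no coercivity estimate of the form $J\to+\infty$ as $\|\Psi-h_\Delta\|_\infty\to\infty$. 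The genuine obstruction---the one isolated as ``stability'' in Theorem~\ref{thm:stability}---is that a dual optimizer $\Psi$ solves $\MA(\Psi)=\nu$ \emph{in the chartwise Aleksandrov sense of Example~\ref{ex:PolytopeHessian}} only if the optimal plan is supported on $\cup_{\sigma}\sigma^\circ\times\Star(n_\sigma)^\circ$: the measure $\MA(\Psi)$ is assembled from the gradient images of the local potentials in the affine charts, and mass that the plan carries from $\sigma^\circ$ to points outside $\Star(n_\sigma)$ is simply not registered by (or is miscounted in) those local computations. Your mechanism of ``destabilizing rays $h_\Delta+tu_F$ with nonnegative slopes'' is asserted rather than proved, and in any case nonnegativity of one-sided derivatives at a minimizer of a convex functional is automatic; it does not deliver the required support condition on the plan. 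What the symmetry hypothesis actually buys is precisely this locality: one symmetrizes the optimal plan and uses transitivity of the symmetry group on facets and vertices, together with volume-balancing constraints of the type \eqref{eq:StructuralStability}, to force the invariant plan to match each $\sigma^\circ$ with $\Star(n_\sigma)^\circ$. Until you supply that step, your argument produces a dual optimizer but not a solution of \eqref{eq:MAPolytope}.

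Two smaller points. Your uniqueness argument presumes the optimal plan is unique, which need not hold; the correct route is that any two solutions are dual optimizers whose $c$-transforms have equal $c$-gradients $\mu_M$-almost everywhere (single-valuedness a.e.\ with respect to the Lebesgue-type measure $\mu_M$), so their difference is locally constant on a full-measure subset of $\partial\Delta$, and connectedness of $\partial\Delta$ finishes. And ``strict convexity of $J$ modulo constants'' is not true in the generality you invoke it; it should not be used either for coercivity or for uniqueness.
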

\begin{rmk}
    For $\nu=\nu_N$, the existence part of Theorem~\ref{thm:symmetric} was proved in \cite{LiFermat}, where $\Psi$ is constructed as a limit of the potentials of Calabi-Yau metrics on the associated family \ref{eq:Family}.
\end{rmk}

\begin{defi}
    Let $\Delta$ be a reflexive polytope
    and assume $\nu$ is a probability measure on $\partial\delta^\vee$ supported on the open faces. We will say that $\nu$ is stable if the optimal transport problem given by $(\partial\Delta,\mu_M), (\partial\Delta^\vee,\nu)$ and cost function $c(m,n)=-\langle m,n \rangle_{M,N}$ admits an optimal transport plan supported on 
    $$ \cup_{\sigma} \sigma^\circ\times \Star(n_\sigma)^\circ = \cup_{\tau} \Star(m_\tau)^\circ\times \tau^\circ.$$
\end{defi}

\begin{thm}\cite{AH}\label{thm:stability}
Let $\Delta$ be a reflexive polytope
and assume $\nu$ is a probability measure on $\partial\Delta^\vee$. Then there is a unique $\Psi:N_\R\rightarrow \R$ satisfying $|\sup_{m\in \Delta} \langle m,n \rangle -\Psi|$ and \eqref{eq:MAPolytope}
if and only if $\nu$ is stable. 
\end{thm}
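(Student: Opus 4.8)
The plan is to run the optimal-transport machinery of Section~\ref{sec:Pairings} and to identify \emph{stability} as exactly the condition under which the $c$-gradient of the optimal potential is compatible with the integral affine charts of Example~\ref{ex:Polytope}. The starting observation is that, by the pushforward description \eqref{eq:MADefPF} of the Monge--Ampère operator together with the local identification of $\partial^c\Phi$ with the classical gradient (the fourth bullet point preceding Theorem~\ref{thm:DualPolytope}, see \cite{AH}), solving $\MA(\Psi)=\nu$ is equivalent to producing a $c$-convex $\Psi$ on $N_\R$, with $\Phi=\Psi^c$ on $M_\R$, whose $c$-gradient $\partial^c\Phi$ transports $\mu_M$ to $\nu$ and maps each open facet $\sigma^\circ\subset\partial\Delta$ into $\Star(n_\sigma)$. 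The whole argument then reduces to matching these two requirements to the existence of an optimal plan supported on $\cup_\sigma\sigma^\circ\times\Star(n_\sigma)^\circ$.

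For the direction \textbf{stability $\Rightarrow$ existence}, I would start from an optimal plan $\gamma$ supported on $\cup_\sigma\sigma^\circ\times\Star(n_\sigma)^\circ$, supplied by the stability hypothesis. By strong Kantorovich duality (the theorem of \cite{Villani} quoted above) together with the characterization of optimal plans via the contact set, $\gamma$ is supported on $\{\Phi(m)+\Psi(n)=\langle m,n\rangle_{M,N}\}$ for a maximizing pair $(\Phi,\Psi=\Phi^c)$, and the normalization $|\sup_{m\in\Delta}\langle m,n\rangle-\Psi|<\infty$ selects $\Psi$ as a bounded perturbation of the support function of $\Delta$. The support condition on $\gamma$ means precisely that $\partial^c\Phi$ sends $\sigma^\circ$ into $\Star(n_\sigma)$, so on each vertex chart around $n_\sigma$ the local identification applies and the chartwise Aleksandrov measure of $\Psi$ coincides with the pushforward of $\mu_M$ under $\partial^c\Phi$. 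Summing over charts and using the marginal condition $(\partial^c\Phi)_\#\mu_M=\nu$ on $\gamma$ then yields $\MA(\Psi)=\nu$.

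For \textbf{existence $\Rightarrow$ stability} I would run this in reverse: given a solution $\Psi$ with $\Phi=\Psi^c$, the plan $\gamma=(\id,\partial^c\Phi)_\#\mu_M$ is supported on the contact set of a $c$-convex function and is therefore optimal by the same characterization. Reading $\MA(\Psi)=\nu$ chart by chart through \eqref{eq:MADefGI} forces $\partial^c\Phi(\sigma^\circ)\subset\Star(n_\sigma)$, since mass transported out of $\Star(n_\sigma)$ would make the chartwise Monge--Ampère measure disagree with $\nu$; hence $\gamma$ is supported on the stable region and $\nu$ is stable. Uniqueness would then follow from concavity of the dual functional of Section~\ref{sec:Pairings}, which determines the optimal $c$-convex potential $\Phi$ up to an additive constant on the full-support measure $\mu_M$, the normalization $|\sup_{m\in\Delta}\langle m,n\rangle-\Psi|<\infty$ removing that constant.

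The hard part will be matching the intrinsic, chart-defined Aleksandrov measure $\MA(\Psi)$ with the transport pushforward \emph{without} the regularity hypotheses of Theorem~\ref{thm:DualPolytope} (smoothness of the induced Hessian metric and the homeomorphism property of $\partial^c\Phi$). In this generality $\partial^c\Phi$ is only a measurable, possibly multivalued map, and the integral affine structure is defined away from a codimension-$2$ subset, so the local identification of $\partial^c\Phi$ with a classical gradient must be upgraded to an almost-everywhere statement and the contribution of the singular skeleton shown to be both $\mu_M$- and $\nu$-null. Establishing this measure-theoretic compatibility, and in particular showing that a failure of the support condition produces a genuine discrepancy between $\MA(\Psi)$ and $\nu$ rather than being absorbed into the singular locus, is where the real work lies.
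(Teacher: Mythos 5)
The paper itself contains no proof of this theorem: it is quoted from \cite{AH}, and the survey only indicates (Section~\ref{sec:Pairings}) that the relevant functional is the Kantorovich dual of the transport problem with cost $-\langle m,n\rangle_{M,N}$. Your outline follows exactly that intended route, and the architecture is right: duality produces a maximizing pair $(\Phi,\Phi^c)$, the support condition is what allows the $c$-gradient to be read as a classical gradient in the vertex charts, and the two implications of the theorem correspond to the two directions of your argument. However, there is a genuine gap in the direction stability $\Rightarrow$ existence beyond the one you flag. You pass from ``some optimal plan $\gamma$ is supported on $\cup_\sigma \sigma^\circ\times\Star(n_\sigma)^\circ$'' to ``$\partial^c\Phi$ sends $\sigma^\circ$ into $\Star(n_\sigma)$''. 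These are not equivalent: the contact set $\{\Phi(m)+\Phi^c(n)=\langle m,n\rangle_{M,N}\}$ of the dual maximizer can be strictly larger than $\supp\gamma$, and the chart-wise Aleksandrov measure is computed from the full (multivalued) $c$-subdifferential image, not from the part charged by one particular optimal plan. One must show that the $c$-subdifferential itself respects the star condition --- for instance by arguing that a contact pair $(m,n)$ with $m\in\sigma^\circ$, $n\notin\Star(n_\sigma)$ creates excess subgradient mass in some chart, contradicting $(\partial^c\Phi)_\#\mu_M=\nu$ --- and this is where the substantive work in \cite{AH} lies; your sketch treats it as automatic.

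The uniqueness step is also off. Adding a constant to $\Psi$ preserves both $\MA(\Psi)=\nu$ and the boundedness of $\sup_{m\in\Delta}\langle m,n\rangle-\Psi$, so the normalization does not ``remove the constant''; uniqueness can only hold up to an additive constant, and even that does not follow from concavity alone --- the Kantorovich dual functional is concave but not strictly concave, and one needs the full support and absolute continuity of $\mu_M$ in the charts, together with connectedness of the regular part of $\partial\Delta$, to conclude that two maximizers have equal gradients almost everywhere and hence differ by a constant. Finally, your closing paragraph correctly identifies the measure-theoretic crux (matching the chart-defined Aleksandrov measure with the transport pushforward away from the $(d-1)$-skeleton, and showing the skeleton is null for both marginals); just be aware that this compatibility, together with the two points above, is essentially the entire content of the proof rather than a technical afterthought.
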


When $\nu=\nu_N$, the stability condition above can be verified in cases where $\Delta$ have a lot of symmetries, for example the standard unit simplex (recovering Theorem~\ref{thm:symmetric}) and the unit cube). However, given a general reflexive polytope $\Delta$, it does not seem easy to check the stability of $\nu_N$. On the other hand, a necessary condition for stability of $\nu_N$ is given by the volume estimates 
\begin{equation} \label{eq:StructuralStability} \nu_N(\tau)\leq \mu(\Star(\mu_M)) \text{ and } \mu_M(\sigma)\leq \nu(\Star(n_\sigma)) \end{equation}
(see \cite[Lemma~19]{AH}) which are easily computed given $\Delta$. Computer aided computations show that out of the 4319 existing reflexive 3-dimensional polytopes, 1542 violate \eqref{eq:StructuralStability}, hence are not stable (see \cite[Table~2]{AH}). 

Conversely, a sufficient condition for existence of solutions (and hence stability) was given in \cite{LiToric}:
\begin{thm}\cite{LiToric}\label{thm:Li}
Let $\Delta$ be a reflexive polytope such that $\langle m,n \rangle \not=0$ for all vertices $m$ and $n$ of $\Delta$ and $\Delta^\vee$, respectively. Then there is a unique $\Psi:N_\R\rightarrow \R$ satisfying $|\sup_{m\in \Delta} \langle m,n \rangle -\Psi|$ and $$ \MA(\Psi) = \nu_N. $$ 
\end{thm}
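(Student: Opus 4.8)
The plan is to deduce Theorem~\ref{thm:Li} entirely from the stability criterion of Theorem~\ref{thm:stability}. That result already packages existence \emph{and} uniqueness of the solving $\Psi$ into the single statement that $\nu_N$ be stable, so nothing beyond stability needs to be checked: once $\nu_N$ is shown to be stable, the unique $\Psi$ with $\MA(\Psi)=\nu_N$ is produced by Theorem~\ref{thm:stability}, and uniqueness comes for free. Thus the whole problem is to prove that the hypothesis $\langle m,n\rangle_{M,N}\neq 0$ on all pairs of vertices forces the optimal transport problem for $(\partial\Delta,\mu_M)$, $(\partial\Delta^\vee,\nu_N)$ with cost $c(m,n)=-\langle m,n\rangle_{M,N}$ to admit an optimal plan supported on $\bigcup_\sigma \sigma^\circ\times\Star(n_\sigma)^\circ$.

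First I would note that the \emph{existence} of an abstract optimal plan is automatic: since $\partial\Delta$ and $\partial\Delta^\vee$ are compact and $c$ is continuous, strong duality (\cite{Villani}) yields an optimal plan $\gamma$ together with a $c$-convex potential $\Phi$, with $\gamma$ supported on the contact set $\{(m,n):\Phi(m)+\Phi^c(n)=c(m,n)\}$, i.e.\ on the graph of the $c$-subdifferential $\partial^c\Phi$. The entire content of the theorem therefore lives in the \emph{support} of $\gamma$: I must show that, up to a $\gamma$-null set, mass sitting on the relative interior $\sigma^\circ$ of a facet of $\Delta$ is matched only into the open star $\Star(n_\sigma)^\circ$ of the dual vertex.

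The geometric mechanism I expect to drive this is as follows. Minimizing $c$ amounts to maximizing $\int\langle m,n\rangle_{M,N}\,\gamma$, and for $m\in\sigma^\circ$ the function $n\mapsto\langle m,n\rangle_{M,N}$ attains its maximum over $\Delta^\vee$ at the \emph{unique} vertex $n_\sigma$, with value $\langle m,n_\sigma\rangle_{M,N}=1$; the marginal constraint $\nu_N$ then forces the matched mass to spread off of this single point into the facets of $\Delta^\vee$ adjacent to $n_\sigma$, i.e.\ into $\Star(n_\sigma)$. Here the hypothesis enters decisively: because $m_\tau,n_\sigma$ are lattice vectors, $\langle m_\tau,n_\sigma\rangle_{M,N}$ is an integer, equal to $1$ exactly when $m_\tau\in\sigma$ and otherwise $\le -1$, with the value $0$ forbidden. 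This integrality gap should give a strictly positive lower bound on the cost penalty incurred by transporting outside the star, making any such transport suboptimal; along the way it also yields the necessary volume inequalities \eqref{eq:StructuralStability}, so the first-order bookkeeping between the facets of $\Delta$ and the stars of $\Delta^\vee$ is automatically consistent.

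The hard part, and the main obstacle, is upgrading this heuristic to a rigorous confinement statement at the lower-dimensional faces of $\partial\Delta$, where several facets meet and the maximizing vertex of $\Delta^\vee$ is no longer unique, so the naive ``nearest vertex'' argument breaks down and one must instead control $\partial^c\Phi$ across the stratification. I would organize this as a face-by-face analysis, using the last compatibility relation recalled in Section~\ref{sec:DualPolytope} (that $\alpha_\sigma\circ\partial^c\Phi$ agrees locally with the classical gradient $\partial(\Phi\circ\alpha_\tau^{-1})\circ\alpha_\tau$) to translate the support condition into a statement about which lattice pairings $\langle m_\tau,n_\sigma\rangle_{M,N}$ can occur on the contact set, and then invoking the forbidden value $0$ to exclude the off-star pairings by $c$-cyclical monotonicity. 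Once $\supp\gamma\subset\bigcup_\sigma\sigma^\circ\times\Star(n_\sigma)^\circ$ is established, $\nu_N$ is stable and Theorem~\ref{thm:stability} concludes; a more constructive variant, building the optimal plan explicitly by affine facet-to-star pieces and certifying optimality with the dual potential $\Phi$ directly (closer in spirit to \cite{LiToric}), would give an alternative route at the cost of the same combinatorial verification.
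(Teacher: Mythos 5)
The paper itself offers no proof of Theorem~\ref{thm:Li}: it is quoted from \cite{LiToric}, and the surrounding text (``a sufficient condition for existence of solutions (and hence stability) was given in \cite{LiToric}'') indicates that Li establishes existence directly, with stability of $\nu_N$ then following from the ``only if'' direction of Theorem~\ref{thm:stability}. Your proposal runs the logic the other way: verify stability of $\nu_N$ by a direct optimal transport argument and then invoke Theorem~\ref{thm:stability} to get existence and uniqueness. That reduction is legitimate and not circular (Theorem~\ref{thm:stability} does not rely on Theorem~\ref{thm:Li}), and your observation that $\langle m_\tau,n_\sigma\rangle_{M,N}$ is an integer equal to $1$ when $m_\tau\in\sigma$ and $\le -1$ otherwise under the hypothesis is indeed where the assumption must enter.

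There is, however, a genuine gap, which you name but do not close: the entire content of the theorem sits in the confinement statement $\supp\gamma\subset\bigcup_\sigma\sigma^\circ\times\Star(n_\sigma)^\circ$, and your argument for it is only the heuristic that mass on $\sigma^\circ$ prefers the unique maximizing vertex $n_\sigma$ and then ``spreads'' into its star. That heuristic applies verbatim to every reflexive polytope, yet $1542$ of the $4319$ three-dimensional reflexive polytopes violate even the necessary volume inequalities \eqref{eq:StructuralStability} and so are not stable; hence the integrality gap must be exploited in a genuinely quantitative and global way. One must produce a feasible plan meeting both marginal constraints while avoiding the off-star region, or a dual potential certifying one --- it is not enough to note that off-star transport is penalized pointwise, since the marginals may simply force such transport. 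The face-by-face analysis at the lower-dimensional strata and the appeal to $c$-cyclical monotonicity are announced rather than carried out, and the claim that \eqref{eq:StructuralStability} follows ``along the way'' is asserted without argument. As written the proposal is a plausible program, and a different one from Li's, but not a proof.
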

Theorem~\ref{thm:Li} is striking since it applies to some polytopes with small symmetry groups. In particular, it applies when $\Delta$ is the polytope associated to $\P^3$ blown up in a torus invariant point. However, out of the 4319 existing reflexive 3-dimensional polytopes, only 238 satisfy this condition (see \cite[Table~2]{AH}). Together with \eqref{eq:StructuralStability}, this means that for the majority of reflexive polytopes $\Delta$ stability/existence of solutions is not known. 

By the breakthrough by Yang Li in \cite{LiFermat, LiSYZ}, the existence results in Theorem~\ref{thm:stability} and Theorem~\ref{thm:Li} have the following consequence:
\begin{cor}\label{cor:SYZ}
    Assume $\Delta$ is a reflexive Delzant Polytope and either $\nu_N$ is stable or the condition in Theorem~\ref{thm:Li} holds, then \eqref{eq:Family} satisfies the weak metric SYZ conjecture, i.e. for every $\delta>0$, the Calabi-Yau manifolds close to the central fiber admit a special Lagrangian torus fibration on a subset of normalized Calabi-Yau volume at least $1-\delta$ (see \cite{LiFermat} for details).
\end{cor}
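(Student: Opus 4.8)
The plan is to read this corollary as the composition of two ingredients: the existence theorems of this section, which produce a solution to the real Monge-Ampère equation on the base of the conjectural fibration, and the construction of Yang Li, which upgrades such a solution to honest special Lagrangian torus fibrations on the nearby Calabi-Yau manifolds. Since we are allowed to quote everything stated earlier, the entire task reduces to checking that the hypotheses of the corollary supply exactly the input that Li's machinery consumes.

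First I would fix the geometric setup. Because $\Delta$ is Delzant, $Z_\Delta$ is a smooth toric Fano, so the pencil \eqref{eq:Family} is a genuine family of Calabi-Yau hypersurfaces degenerating, as $t\to 0$, to a large complex structure limit. As recalled after Example~\ref{ex:Polytope}, $\partial\Delta^\vee$ equipped with the singular integral affine structure is the expected base of the SYZ fibration, and $\nu_N$ is its distinguished parallel probability measure; the charts on $\partial\Delta^\vee$ correspond to holomorphic charts covering a large part of the fibers near the central fiber. This identifies the real Monge-Ampère equation $\MA(\Psi)=\nu_N$ on $\partial\Delta^\vee$ as the equation whose solution plays the role of the limiting Kähler potential on the base.

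Next I would produce that solution. Under the first hypothesis $\nu_N$ is stable, so Theorem~\ref{thm:stability} applied with $\nu=\nu_N$ yields a unique $\Psi:N_\R\to\R$ with $|\sup_{m\in\Delta}\langle m,n\rangle-\Psi|$ bounded and satisfying \eqref{eq:MAPolytope} for $\nu=\nu_N$. Under the second hypothesis the nonvanishing condition holds, and Theorem~\ref{thm:Li} gives the same $\Psi$ and the same equation directly. Either way I obtain a solution of the real Monge-Ampère equation on the integral affine structure attached to $\partial\Delta^\vee$, which is the sole datum I need to feed into the analytic construction.

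Finally I would invoke Li's breakthrough. By \cite{LiFermat,LiSYZ}, a solution $\Psi$ of \eqref{eq:MAPolytope} is precisely the input needed to build an approximately Ricci-flat metric on the members of \eqref{eq:Family} close to the central fiber whose associated gradient (semiflat) fibration is special Lagrangian over the locus where the affine structure is nonsingular; the normalized Calabi-Yau volume of the complementary region—the codimension-two singular set together with the interpolation collars used in the gluing—tends to $0$ as $t\to 0$, which produces the bound $1-\delta$ for all sufficiently small $t$. The main obstacle lives entirely in this last step: the hard work is the a priori estimates, the semiflat approximation, and the gluing to genuine special Lagrangian fibrations carried out by Li, which I would quote as a black box. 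Our own contribution is only the verification, just completed, that the two hypotheses of the corollary guarantee the existence of the solution $\Psi$ that this black box requires.
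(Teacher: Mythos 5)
Your proposal matches the paper's own (implicit) argument exactly: the paper derives this corollary simply by combining the existence of a solution $\Psi$ to $\MA(\Psi)=\nu_N$ from Theorem~\ref{thm:stability} or Theorem~\ref{thm:Li} with Yang Li's results in \cite{LiFermat,LiSYZ}, quoted as a black box, which is precisely the decomposition you describe. No further comment is needed.
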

\begin{rmk}
    When $\Delta$ is the standard unit simplex and $\{s_0+ts\}$ is the Fermat family, the weak metric SYZ conjecture was established (modulo passing to subsequence) by Yang Li in the seminal work \cite{LiFermat}. The requirement to pass to subsequence was eliminated and the scope extended to more general families of hypersurfaces of $\P^{d+1}$ in \cite{HJMM,PS}.
\end{rmk}
\begin{rmk}
In dimension 2, singular integral affine structures on the 2-sphere and solutions to Monge-Ampère equations where constructed by Loftin in \cite{Loftin}. 
\end{rmk}

\subsection{Open Problems}
\begin{itemize}
    \item Find sufficient conditions for stability (and thus existence of solutions) which are more general than the assumption in Theorem~\ref{thm:Li}. Find necessary conditions for stability/existence of solutions which are less general than \eqref{eq:StructuralStability}.
    \item Under what conditions are $\partial^c\Psi:\partial\Delta\rightarrow \partial\Delta^\vee$ a homeomorphism, and under what conditions does the inverse image of the $(d-1)$-skeleton of $\Delta$ under $\partial\Phi$ intersect the $(d-1)$-skeleton of $\Delta^\vee$ in a $(d-2)$-dimensional set? The latter, but not the former, is known for the standard unit simplex and the unit cube by \cite{AH}. 
    \item Given a Hessian manifold $(X,\nabla,g)$ with dual $(X,\nabla',g)$. Find a natural pairing of $X$ with itself, which generalize the pairings discussed in Section~\ref{sec:Pairings}. Incidentally, since a pairing induce a notion of $c$-convex functions when used as a cost function, this also addresses a question concerning good positivity notions for Hessian metrics asked in \cite{LiFermat,LiSurvey}.
\end{itemize}
In this exposition, we've intentionally avoided the question of regularity for the solutions to the Monge-Ampère equations. On compact Hessian manifolds, Pogorelov type counterexamples where ruled out by Caffarelli and Viaclovski in \cite{CafVia}. For the standard unit simplex and the unit cube, the solution is known to be smooth by \cite{AH}. However, for reflexive polytopes in general, the question of regularity is open. See for example \cite{Moo21, MR22} for some pathological behaviour in related settings.

\end{document}